\documentclass[11pt]{article}

\usepackage[margin=1.05in]{geometry}
\usepackage{amsmath,amssymb,amsthm,mathtools}
\usepackage{booktabs}
\usepackage{graphicx}
\usepackage{float}
\usepackage{enumitem}
\usepackage{hyperref}
\usepackage{xcolor}
\usepackage{authblk}
\usepackage{aliascnt}
\hypersetup{colorlinks=true,linkcolor=blue!55!black,citecolor=blue!55!black,urlcolor=blue!55!black}

\newtheorem{theorem}{Theorem}[section]

\newaliascnt{lemma}{theorem}
\newtheorem{lemma}[lemma]{Lemma}
\aliascntresetthe{lemma}

\newaliascnt{proposition}{theorem}
\newtheorem{proposition}[proposition]{Proposition}
\aliascntresetthe{proposition}

\newaliascnt{corollary}{theorem}
\newtheorem{corollary}[corollary]{Corollary}
\aliascntresetthe{corollary}

\newaliascnt{conjecture}{theorem}

\aliascntresetthe{conjecture}

\theoremstyle{definition}
\newaliascnt{definition}{theorem}
\newtheorem{definition}[definition]{Definition}
\aliascntresetthe{definition}

\newaliascnt{remark}{theorem}
\newtheorem{remark}[remark]{Remark}
\aliascntresetthe{remark}

\usepackage[nameinlink,noabbrev]{cleveref}

\crefname{theorem}{Theorem}{Theorems}
\Crefname{theorem}{Theorem}{Theorems}
\crefname{lemma}{Lemma}{Lemmas}
\Crefname{lemma}{Lemma}{Lemmas}
\crefname{proposition}{Proposition}{Propositions}
\Crefname{proposition}{Proposition}{Propositions}
\crefname{corollary}{Corollary}{Corollaries}
\Crefname{corollary}{Corollary}{Corollaries}
\crefname{conjecture}{Conjecture}{Conjectures}
\Crefname{conjecture}{Conjecture}{Conjectures}
\crefname{definition}{Definition}{Definitions}
\Crefname{definition}{Definition}{Definitions}
\crefname{remark}{Remark}{Remarks}
\Crefname{remark}{Remark}{Remarks}

\newcommand{\Av}{\operatorname{Av}}
\newcommand{\A}{\mathcal A}

\newcommand{\spr}{\operatorname{spr}}
\newcommand{\last}{\operatorname{last}}

\title{Finite-state enumeration of adjacency-constrained\\132-avoiding permutations}
\author[1]{Teruki Mayama}
\author[1, *]{Dai Akita}
\date{}
\affil[1]{Department of Mechano-Informatics, Graduate School of Information Science and Technology, The University of Tokyo, Tokyo, Japan}
\affil[*]{d.akita@ne.t.u-tokyo.ac.jp}

\begin{document}
\maketitle

\begin{abstract}
For a fixed integer $m\ge 1$, let $\A_n^{(m)}$ be the set of permutations $\pi\in S_n$ that avoid the pattern $132$ and satisfy the adjacency bound $|\pi_{i+1}-\pi_i|\le m$ for all $i$. 
Here, a pattern $132$ means three indices $i<j<k$ such that $\pi_i<\pi_k<\pi_j$.  
A recent study initiated the enumeration of these constrained 132-avoiding permutations, treating the case \(m=2\) by deriving a rational ordinary generating function and asking for finite-state decompositions, rational generating functions, and explicit rational formulas for larger fixed \(m\).
We introduce a two-sided endpoint-state decomposition that works uniformly for every fixed $m$.  
The state variables impose threshold bounds on the endpoint deficiencies $n-\pi_1$ and $n-\pi_n$, with thresholds in $\{0,1,\ldots,m-1,\infty\}$. 
This gives at most \((m+1)^2\) states and proves that, for every fixed \(m\), the ordinary generating function \(A^{(m)}(x)\) is rational and can be computed effectively by exact linear algebra.
We also identify cyclic strongly connected components of the dependency graph in the finite-state system to give an explicit upper bound for the order of an eventual constant-coefficient recurrence satisfied by the sequence $a_n^{(m)}=|\A_n^{(m)}|$.  
We then recover the known case $m=2$ from this state system and work out the case $m=3$ explicitly. 
On the asymptotic side, we prove that the exponential growth constant exists for
every $m$; for $m\ge2$ it is obtained from the spectral radii of the two cyclic components with more than one vertex in the state system.
We determine the simple-pole asymptotics for $m=2$ and $m=3$, and we prove that the growth constants are nondecreasing in $m$, strictly smaller than the Catalan growth constant $4$ for every finite $m$, and converge to $4$ as $m\to\infty$.
\end{abstract}

\noindent\textbf{Keywords.} Pattern avoidance; Catalan numbers; bounded gaps; finite-state recursion; rational generating functions.\\
\textbf{Mathematics Subject Classification.} 05A05, 05A15, 05A16.

\section{Introduction}

A permutation $\pi=\pi_1\cdots\pi_n\in S_n$ is said to contain the pattern
$132$ if there exist indices $i<j<k$ such that $\pi_i<\pi_k<\pi_j$.
If no such triple exists, then $\pi$ is said to avoid $132$.
We write
\[
        \Av_n(132)=\{\pi\in S_n:\pi\text{ avoids the pattern }132\}.
\]
Here \(\Av_0(132)\) consists of the empty permutation.
The set $\Av_n(132)$ is counted by the $n$th Catalan number
$C_n=\frac{1}{n+1}\binom{2n}{n}$.
Equivalently, the permutation class
\[
        \Av(132)=\bigcup_{n\ge0}\Av_n(132)
\]
is one of the standard Catalan classes; see Simion and Schmidt
\cite{SimionSchmidt1985} and B\'ona \cite{Bona2012}.
A standard proof of this Catalan enumeration decomposes a $132$-avoiding
permutation at its maximum element.  If $n$ occurs in position $k$, then the
entries to the left of $n$ are all larger than the entries to the right of $n$,
and the two sides are again $132$-avoiding after standardization.  Hence
\[
        |\Av_n(132)|
        =\sum_{k=1}^{n}|\Av_{k-1}(132)|\,|\Av_{n-k}(132)|,
\]
which is the usual Catalan recurrence.

Nadler \cite{Nadler2026} recently studied the enumeration of $132$-avoiding
permutations satisfying the local adjacency constraint
\[
        |\pi_{i+1}-\pi_i|\le m
\]
for all $1\le i<n$.
We call this the $m$-bounded condition. 
For fixed $m$, let
\[
        \mathcal{A}_n^{(m)}
        =
        \{\pi\in\Av_n(132):|\pi_{i+1}-\pi_i|\le m\text{ for all }1\le i<n\}.
\]
We also write
\[
        a_n^{(m)}=|\mathcal{A}_n^{(m)}|,
        \qquad
        A^{(m)}(x)=\sum_{n\ge0}a_n^{(m)}x^n.
\]
The $m$-bounded condition is a local condition on adjacent
positions, whereas pattern avoidance is a global relative-order condition.
As Nadler observed, the interaction between these two constraints has a simple
but important consequence: if the maximum element is not the final entry, then
its position is at most $m$. 
Nadler worked out the case $m=2$ and derived a rational ordinary generating
function.

Our main result is a finite recursive system for the enumeration of
\(m\)-bounded \(132\)-avoiders for every fixed \(m\),
obtained by introducing endpoint states into the standard decomposition of
\(132\)-avoiders at the maximum element.  For a permutation \(\tau\) of length
\(N\ge1\), we call \(N-\tau_1\) and \(N-\tau_N\) its first and last endpoint
deficiencies, respectively.

The reason for using two endpoint deficiencies is best seen by asking how
endpoint threshold conditions transform under the maximum decomposition.
Thus, for the moment, suppose that a permutation \(\pi\in\A_n^{(m)}\) is required
to satisfy two prescribed endpoint bounds
\[
n-\pi_1\le u,\qquad n-\pi_n\le v.
\]
Let the maximum element \(n\) occur at position \(k\), so that \(\pi_k=n\).
By the positional restriction above, \(k\in\{1,\ldots,m\}\cup\{n\}\).

First assume that \(k<n\), and write \(\pi=L\,n\,R\).  Then the right block
\(R\) has length \(n-k\) and uses the values \(1,\ldots,n-k\).  The boundary
adjacency condition between \(n\) and the first entry of \(R\) becomes
\[
n-R_1\le m
\quad\Longleftrightarrow\quad
(n-k)-R_1\le m-k.
\]
Thus the first endpoint threshold for the recursive right block is \(m-k\).
The prescribed last endpoint bound for the whole permutation transforms in the
same way:
\[
n-R_{\last}\le v
\quad\Longleftrightarrow\quad
(n-k)-R_{\last}\le v-k.
\]
Hence the recursive call on the right block must remember both a first endpoint
threshold and a last endpoint threshold.

The left block \(L\) has length \(k-1\) and uses the values \(n-k+1,\ldots,n-1\).
Since the difference between the entry of \(L\) and \(n\) is at most \(k-1 \le m-1\), the boundary adjacency condition between \(n\) and the last entry of \(L\) is automatically met.
Thus it contributes only finitely many possibilities, depending only on the first endpoint bound.

If instead the maximum element is appended at the final position, say
\(\pi=\tau n\), then the new boundary adjacency condition is
\[
        n-\tau_{\last}\le m
        \quad\Longleftrightarrow\quad
        (n-1)-\tau_{\last}\le m-1,
\]
while the first endpoint bound transforms as
\[
n-\pi_1\le u
\quad\Longleftrightarrow\quad
(n-1)-\tau_1\le u-1. 
\]
Thus the append case also transforms endpoint thresholds into endpoint
thresholds. 

These observations motivate the endpoint-restricted sets
\[
\mathcal{T}_{u,v}^{(m)}(n)
=
\left\{ \pi\in\A_n^{(m)}:n-\pi_1\le u,\ n-\pi_n\le v \right\}.
\]
We derive a recursive system for these sets,
with \(\mathcal{A}^{(m)}_n=\mathcal{T}_{\infty, \infty}^{(m)}(n)\).
See \Cref{sec:general} for the precise definitions and the full recursion.

The resulting finite system of recursive equations proves that $A^{(m)}(x)$ is
rational for every fixed $m$.  Equivalently, the sequence $a_n^{(m)}$ satisfies
an eventual constant-coefficient linear recurrence for every fixed $m$.  In particular,
this extends the finite-state phenomenon observed by Nadler for $m=2$ to all fixed values of $m$.
In addition, the same finite system determines the exponential growth of $a_n^{(m)}$, which is also proved to converge to the Catalan growth constant $4$ as $m\to\infty$.

This result should be compared with the study of Gillespie, Monks, and Monks \cite{GillespieMonksMonks2020},
where enumeration of $k$-boundedness is considered without pattern avoidance
but with anchoring condition $\pi_1 = 1$ and $\pi_n = n$.
They proved rationality of the generating functions for each fixed $k$ by deriving a recursive system. 
Our finite system is of a different kind.  It is not obtained from a general
transfer matrix for bounded adjacent differences alone, but from the standard
decomposition of $132$-avoiders at the maximum element.  The bounded-adjacency
condition is recorded through two endpoint-deficiency thresholds, which is what
allows the recursion to close on a finite state space.

This paper is organized as follows.  \Cref{sec:general} proves the general
finite-state recursion, the rationality theorem, the component structure of the
associated dependency graph, and a recurrence-order bound.  \Cref{sec:examples}
works out explicit cases: $m=1$, $m=2$, and $m=3$, using this component structure to
organize the computation.  
\Cref{sec:growth} discusses exponential growth from the cyclic components of the dependency graph. 
It first assigns two values called component radii and component growth rates to the cyclic components, then proves that the largest component growth rate among the two cyclic components with more than one vertex is the exponential growth constant for every fixed $m\ge2$.  
The same section then gives the stronger simple-pole asymptotics for $m=2,3$, proves the large-$m$ comparison estimates and reports numerical growth constants up to $m=100$.
\Cref{sec:connections} records further connections and open directions.

\section{The general finite-state recursion}\label{sec:general}

The purpose of this section is to turn the Catalan maximum decomposition into
a finite linear recursion.  The maximum element $n$ separates a
$132$-avoider into a left block of larger values and a right block of smaller
values.  The adjacency constraint strongly restricts this decomposition: unless
$n$ is the final entry, its position is at most $m$.  Thus the left block is
bounded in length, while the right block is the only part that can remain
large.  The endpoint states introduced below record exactly the boundary
information needed to recurse on this right block.

Throughout this section, fix $m\ge1$.  We use the notation
$\A_n^{(m)}$, $a_n^{(m)}$, and $A^{(m)}(x)$ introduced in the Introduction,
with the convention $a_0^{(m)}=1$.

\subsection{Maximum decomposition}

We first recall the classical maximum decomposition of $132$-avoiders.  This
decomposition is the source of the recursion: after removing the maximum
element, the right block remains a smaller $132$-avoider, while the left block
will be shown to have bounded length under the adjacency constraint.
Throughout, if $X=X_1\cdots X_t$ is a block, then $X_i$ denotes its $i$th
entry; when $t\ge1$, we write $X_{\last}=X_t$.  We use the standard notion
of standardization: the standardization of a block $X$ with distinct entries
is the permutation with the same relative order as $X$, obtained by replacing
the smallest entry by $1$, the second smallest by $2$, and so on.

The next two lemmas isolate these two ingredients: the usual maximum
decomposition and the position restriction imposed by the adjacency constraint.

\begin{lemma}[Maximum decomposition]\label{lem:maxdecomp}
Let $\pi\in\Av_n(132)$, and suppose that $n$ occurs in position $k$. 
Let $L$ and $R$ be the blocks to the left and to the right of $n$,
respectively, so that $\pi=L\,n\,R$. 
Then every entry of $L$ is larger than every entry of
$R$.  Consequently, $L$ uses the values $n-k+1,n-k+2,\ldots,n-1$, the block
$R$ uses the values $1,2,\ldots,n-k$, and the standardizations of $L$ and $R$
both avoid $132$.
\end{lemma}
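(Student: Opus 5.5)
The plan is to argue directly from the definition of $132$-avoidance, using the maximum entry $n=\pi_k$ as the middle entry of a potential pattern. The key claim is that every entry of $L$ exceeds every entry of $R$. Suppose instead that some $L_i<R_j$. The positions of $L_i$, $n$, and $R_j$ are increasing, and their values satisfy $L_i<R_j<n$. In the notation of the definition, this is a triple $i<k<j'$ with $\pi_i<\pi_{j'}<\pi_k$, which is an occurrence of $132$. This contradicts $\pi\in\Av_n(132)$. If either block is empty, the claim holds vacuously.

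Next I would read off the value sets by counting. The block $L$ has $k-1$ entries and $R$ has $n-k$ entries, and together they use exactly the values $\{1,\ldots,n-1\}$. Since every value in $L$ exceeds every value in $R$, the set of values in $L$ is an upper set of $\{1,\ldots,n-1\}$ of size $k-1$. So $L$ uses $\{n-k+1,\ldots,n-1\}$, and $R$ uses the complementary set $\{1,\ldots,n-k\}$.

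Finally, I would observe that pattern containment is a property of relative order only. An occurrence of $132$ in the standardization of $L$ or of $R$ would pull back to an occurrence in the corresponding subsequence of $\pi$, and hence in $\pi$ itself. Therefore both standardizations avoid $132$. In fact no standardization is needed for $R$, since its values are already $1,\ldots,n-k$.

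None of these steps is a real obstacle. The only point requiring care is the first: matching the roles in the triple correctly, with $L_i$ as the ``1'', $n$ as the ``3'', and $R_j$ as the ``2''.
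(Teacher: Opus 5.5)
Your proposal is correct and follows essentially the same route as the paper: you use $n$ as the ``3'' to force every entry of $L$ above every entry of $R$, read off the value sets by counting, and pull back any $132$-occurrence in a standardized block to one in $\pi$. Your remarks that an empty block makes the claim vacuous and that $R$ needs no standardization are small additions the paper leaves implicit.
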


\begin{proof}
If there were entries $x\in L$ and $y\in R$ with $x<y$, then $x,n,y$ would
occur in this order and would satisfy $x<y<n$, forming a $132$-pattern.  Hence
all entries of $L$ exceed all entries of $R$. 
The assertions about the value sets follow because the entries form the set
$\{1,2,\ldots,n\}$.
Any $132$-pattern inside either block would also be a $132$-pattern in $\pi$,
so both standardized blocks avoid $132$.
\end{proof}

\begin{lemma}[Position restriction]\label{lem:position}
Let $\pi\in\A_n^{(m)}$, and suppose that the maximum element $n$ occurs in
position $k$.  If the block to the right of $n$ is nonempty, then $k\le m$.
Equivalently, the maximum element occurs either in one of the positions
$1,\ldots,m$ or in the final position.
\end{lemma}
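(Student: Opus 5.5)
The plan is to combine the maximum decomposition of \Cref{lem:maxdecomp} with the single adjacency constraint between $n$ and the entry immediately to its right. Write $\pi=L\,n\,R$ with $\pi_k=n$, and assume $R$ is nonempty, so $k<n$ and $R$ has length $n-k\ge1$. By \Cref{lem:maxdecomp}, $R$ uses exactly the values $1,2,\ldots,n-k$. In particular its first entry $R_1=\pi_{k+1}$ satisfies $R_1\le n-k$.

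Next I will apply the $m$-bounded condition to the adjacent pair $(\pi_k,\pi_{k+1})=(n,R_1)$. This gives $n-R_1\le m$. Combining it with $R_1\le n-k$ yields
\[
k = n-(n-k)\le n-R_1\le m .
\]
This is exactly the claim $k\le m$.

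For the ``equivalently'' part, I will argue as follows. Either $R$ is empty, in which case $k=n$ and the maximum is in the final position. Or $R$ is nonempty, in which case the argument above places $k$ in $\{1,\ldots,m\}$.

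There is no real obstacle here. The only point needing care is to use the value-set statement of \Cref{lem:maxdecomp}: it gives the sharp bound $R_1\le n-k$ rather than merely $R_1<n$, and that sharpness is what makes the inequality close.
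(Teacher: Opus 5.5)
Your proposal is correct and follows the paper's proof essentially line for line. The value-set statement of \Cref{lem:maxdecomp} gives $\pi_{k+1}\le n-k$, the adjacency bound $n-\pi_{k+1}\le m$ then forces $k\le m$, and an empty right block means $n$ is in the final position.
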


\begin{proof}
If the right block is nonempty, then by \Cref{lem:maxdecomp} the entry
immediately to the right of $n$ is at most $n-k$.  Therefore \(n-\pi_{k+1}\ge k\).
The adjacency condition gives $n-\pi_{k+1}\le m$, hence $k\le m$.  If the right
block is empty, then $n$ occurs in the final position.
\end{proof}

\subsection{Endpoint states}

We now define the auxiliary endpoint states used in the recursion.  Set
$B_m=\{0,1,\ldots,m-1,\infty\}$.  For $u,v\in\mathbb Z\cup\{\infty\}$ and
$n\ge1$, define
\begin{align}
\mathcal{T}_{u,v}^{(m)}(n)
&=
\left\{ \pi\in\A_n^{(m)}:n-\pi_1\le u,\ n-\pi_n\le v \right\},\label{eq:T-set-def}\\
T_{u,v}^{(m)}(n)
&=
\left| \mathcal{T}_{u,v}^{(m)}(n) \right|.
\end{align}
Here an inequality with threshold $\infty$ is regarded as vacuous.  These
states are cumulative and need not form a disjoint partition.  The actual
states are the pairs $(p,q)\in B_m^2$.  The extension to arbitrary integer
thresholds is only a notational convention.  In particular,
$T_{u,v}^{(m)}(n)=0$ if $u<0$ or $v<0$, since
$n-\pi_1,n-\pi_n\ge0$.
We also use the convention that $\infty-r=\infty$ for every integer $r\ge0$.
Thus, if $q\in B_m$, the expression $q-r$ is either an element of $B_m$ or a
negative integer, and in the latter case the corresponding term is zero.

The choice of these states is dictated by the maximum decomposition.  Suppose
that the maximum $n$ occurs in position $k<n$, and let $R$ be the right block.
Then $R$ has length $n-k$ and uses the values $1,\ldots,n-k$.  The boundary
adjacency condition between $n$ and the first entry of $R$ is
\[
n-R_1\le m,
\]
which is equivalent to
\[
(n-k)-R_1\le m-k.
\]
Thus the first endpoint threshold for the recursive right block is $m-k$.
Similarly, the final endpoint condition for the whole permutation,
\[
n-R_{\last}\le q,
\]
is equivalent to
\[
(n-k)-R_{\last}\le q-k.
\]
If $q-k<0$, this last endpoint condition is impossible; by the convention
above, the corresponding $T$-term is automatically zero.
Thus the last endpoint threshold is shifted by subtracting $k$.  In this sense
the endpoint states are closed under the maximum decomposition.

It remains to encode the finitely many choices for the left block.  If the
maximum $n$ occurs in position $k$, then the standardized left block has length
$k-1$, and its only dependence on the state $(p,q)$ is through the condition
$n-\pi_1\le p$.  For $1\le k\le m$ and $p\in B_m$, define
\begin{equation}
c_{k,p}=
\begin{cases}
1, & k=1,\\
\left| \left\{ \sigma\in\Av_{k-1}(132):k-\sigma_1\le p \right\} \right|, & k\ge2.
\end{cases}
\end{equation}
If $p=\infty$, the endpoint condition is vacuous.

\begin{remark}[Elementary facts about $c_{k,p}$]
\label{rem:ckp-facts}
The coefficients $c_{k,p}$ have the following elementary properties, which will
be used later.  First, $c_{1,p}=1$ for every $p\in B_m$.  Assume $k\ge2$.  Then
$c_{k,p}$ is nondecreasing in $p$, and
\[
        0\le c_{k,p}\le C_{k-1}.
\]
Moreover,
\[
        c_{k,p}>0
        \quad\Longleftrightarrow\quad
        p\ge1,
\]
where $p=\infty$ is regarded as satisfying $p\ge1$.  Indeed, $c_{k,0}=0$
because $k-\sigma_1\ge1$ for every $\sigma\in S_{k-1}$.  Conversely, if
$p\ge1$, then the decreasing permutation $(k-1)(k-2)\cdots1$ is $132$-avoiding
and satisfies $k-\sigma_1=1$, so $c_{k,p}>0$.

Finally,
\[
        c_{k,p}=C_{k-1}
\]
whenever $p\ge k-1$ or $p=\infty$.  In particular, since $k\le m$ in the
recursion,
\[
        c_{k,m-1}=c_{k,\infty}=C_{k-1}.
\]
For explicit computation, one may use the Catalan-triangle distribution of the
first entry \cite{DeSantisEtAl2013}.  If $k\ge2$ and $p$ is finite, then
\[
        c_{k,p}=
        \sum_{d=1}^{\min(p,k-1)}
        \frac{d}{k-1}\binom{2k-d-3}{k-2},
\]
with the convention that an empty sum is zero; also $c_{k,\infty}=C_{k-1}$.
\end{remark}

With this notation, maximum-position cases give the finite recursion.

\begin{theorem}[Finite-state recursion]\label{thm:recursion}
Fix $m\ge1$.  For every $n\ge2$ and every $p,q\in B_m$,
\begin{equation}
T_{p,q}^{(m)}(n)
=
\sum_{k=1}^{\min(m,n-1)} c_{k,p}
T_{m-k,q-k}^{(m)}(n-k)
+
T_{p-1,m-1}^{(m)}(n-1),
\end{equation}
where $\infty-r=\infty$ and any term with a negative threshold is interpreted
as zero.  In particular, for $n\ge m+1$ this becomes
\begin{equation}
T_{p,q}^{(m)}(n)
=
\sum_{k=1}^{m} c_{k,p}
T_{m-k,q-k}^{(m)}(n-k)
+
T_{p-1,m-1}^{(m)}(n-1).
\end{equation}
\end{theorem}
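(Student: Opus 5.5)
We prove the recursion bijectively. We partition $\mathcal{T}_{p,q}^{(m)}(n)$ according to the position $k$ of the maximum $n$. By \Cref{lem:position}, either $1\le k\le m$ with $k<n$, i.e.\ $1\le k\le\min(m,n-1)$, or $k=n$. These cases are disjoint. For $n\ge2$ the case $k=n$ never coincides with a case $k\le n-1$, so the cases split the set and it suffices to count each one separately.

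\emph{Case $k\le\min(m,n-1)$.} Write $\pi=L\,n\,R$. By \Cref{lem:maxdecomp}, $L$ uses the values $n-k+1,\dots,n-1$ and $R$ uses $1,\dots,n-k$, and both avoid $132$ after standardization. We map $\pi$ to the pair $(\sigma,R)$, where $\sigma$ is the standardization of $L$, i.e.\ $\sigma_i=L_i-(n-k)$.

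We then translate each constraint.
\begin{itemize}[label=$\circ$]
\item Adjacent differences inside $L$ and inside $R$ are unchanged by standardization.
\item The junction $L_{\last}$--$n$ has difference at most $k-1\le m-1$, so it is automatic.
\item The junction $n$--$R_1$ is equivalent to $(n-k)-R_1\le m-k$.
\item For $k\ge2$, the first-entry condition $n-\pi_1\le p$ reads $n-L_1=k-\sigma_1\le p$. For $k=1$ we have $\pi_1=n$, so it holds for every $p\ge0$, matching $c_{1,p}=1$.
\item The last-entry condition is $(n-k)-R_{\last}\le q-k$.
\end{itemize}

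Conversely, let $\sigma\in\Av_{k-1}(132)$ satisfy $k-\sigma_1\le p$, and let $R\in\mathcal T^{(m)}_{m-k,q-k}(n-k)$. Set $\pi=(\sigma+(n-k))\,n\,R$. This $\pi$ is $m$-bounded with the right endpoint data. It also avoids $132$: any occurrence would have to use entries from different blocks, but every entry left of $n$ exceeds every entry right of it. Hence no occurrence can have its ``1'' on the left and its ``2'' on the right, and the remaining configurations reduce to patterns inside one block or use $n$ as the ``3''. The second case forces its ``1'' on the left and its ``2'' on the right, which is impossible.

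This gives the term $c_{k,p}\,T^{(m)}_{m-k,q-k}(n-k)$. The conventions on $\infty$ and negative thresholds handle $q=\infty$ and $q<k$. Here $m-k\in\{0,\dots,m-1\}\subseteq B_m$.

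\emph{Case $k=n$.} Write $\pi=\tau\,n$ with $\tau\in\A^{(m)}_{n-1}$. Appending the maximum at the end creates no $132$-pattern, since $n$ could only play the role of the ``3'', which must not be last. The new adjacency becomes $(n-1)-\tau_{\last}\le m-1$. The last-entry condition $n-n=0\le q$ is automatic. The first-entry condition becomes $(n-1)-\tau_1\le p-1$, which is valid since $n\ge2$ makes $\pi_1=\tau_1$. This gives $T^{(m)}_{p-1,m-1}(n-1)$, with $p-1=-1$ giving zero when $p=0$. That is correct, because for $n\ge2$ the entry $\pi_1=\tau_1$ is not $n$.

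Summing the two cases proves the identity; for $n\ge m+1$ we have $\min(m,n-1)=m$. The only delicate point is the $132$-avoidance of the reassembled permutation together with the bookkeeping at the edge cases: $k=1$ (empty $L$), thresholds equal to $\infty$, and $p=0$. These must be checked against the stated conventions.
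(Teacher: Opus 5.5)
Your proposal is correct and follows the paper's proof: you partition by the position of $n$ via \Cref{lem:position}, standardize the left block to get the factor $c_{k,p}$, shift the thresholds to $(m-k,q-k)$ for the right block, and treat the append case with $(p-1,m-1)$. One step in your converse direction should be said explicitly: adjacency inside $L$ is automatic, because its entries lie in $k-1\le m-1$ consecutive values and so adjacent differences are at most $k-2\le m-2$. This is exactly why $c_{k,p}$ may count all of $\Av_{k-1}(132)$ with no adjacency restriction.
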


\begin{proof}
Let $\pi\in\A_n^{(m)}$ be counted by $T_{p,q}^{(m)}(n)$, and locate the maximum
entry $n$.  By \Cref{lem:position}, either $n$ occurs in one of the positions
$1,\ldots,\min(m,n-1)$ and the right block is nonempty, or $n$ occurs in the
final position.  We count these disjoint cases separately.

First suppose that $n$ occurs in position $k$ with
$1\le k\le \min(m,n-1)$, and let
$\pi=L\,n\,R$ be the corresponding maximum decomposition.
By \Cref{lem:maxdecomp}, the left block $L$ uses the values
$n-k+1,\ldots,n-1$, and the right block $R$ uses the values $1,\ldots,n-k$.
Let $\sigma\in\Av_{k-1}(132)$ be the standardization of $L$; when $k>1$,
this means that $L_i=n-k+\sigma_i$ for $1\le i\le k-1$.
Because $k\le m$, the adjacency condition inside $L$ is
automatic: any two entries of $L$ differ by at most $k-2\le m-2$.  If $L$ is
nonempty, the step from the last entry of $L$ to $n$ is also automatic, since
its size is at most $k-1\le m-1$.

The condition $n-\pi_1\le p$ depends only on the first entry of $L$.  If
$k=1$, then $\pi_1=n$, so this condition is automatic.  If $k>1$, then
\[
n-\pi_1=k-\sigma_1.
\]
Thus the number of possible left blocks compatible with $n-\pi_1\le p$ is
$c_{k,p}$.

The right block $R$ lies in $\A_{n-k}^{(m)}$.  The boundary adjacency condition
between $n$ and $R$ is equivalent to
\[
n-R_1\le m
\quad\Longleftrightarrow\quad
(n-k)-R_1\le m-k.
\]
Similarly, the condition $n-\pi_n\le q$ is equivalent to
\[
n-R_{\last}\le q
\quad\Longleftrightarrow\quad
(n-k)-R_{\last}\le q-k.
\]
Therefore, for this fixed $k$, the right block contributes
$T_{m-k,q-k}^{(m)}(n-k)$, and the factor $c_{k,p}$ counts the compatible left
blocks.

Conversely, choose a standardized left block $\sigma$ counted by $c_{k,p}$
and a right block counted by $T_{m-k,q-k}^{(m)}(n-k)$.  Relabel the left block
by setting $L_i=n-k+\sigma_i$ for $1\le i\le k-1$, place $n$ between the two
blocks, and keep the right block on the values $1,\ldots,n-k$.
We check that no new $132$-pattern is created.  A pattern using the maximum
$n$ would have to use it as the middle index, that is, as the largest entry in
positions $i<k<j$; this is impossible because every entry of $L$ is larger than
every entry of $R$.  A pattern not using $n$ is either contained entirely in one
block, where it is excluded by assumption, or uses entries from both $L$ and
$R$.  In the latter case, writing the three indices as $i<j<\ell$, the first
index that lies before $n$ is in $L$ and the third index that lies after $n$ is
in $R$, so $\pi_i>\pi_\ell$.  This is incompatible with the $132$ condition
$\pi_i<\pi_\ell<\pi_j$.  Hence no cross-block $132$-pattern is created.  The adjacency and endpoint
conditions are exactly the inequalities checked above.  Hence this product
counts precisely the permutations whose maximum is in position $k$.

It remains to consider the case where $n$ occurs in the final position.  Then
$\pi=\tau n$ with $\tau\in\A_{n-1}^{(m)}$.  The new adjacency condition is
equivalent to
\[
n-\tau_{\last}\le m
\quad\Longleftrightarrow\quad
(n-1)-\tau_{\last}\le m-1.
\]
Moreover,
\[
n-\pi_1=1+((n-1)-\tau_1),
\]
so the condition $n-\pi_1\le p$ is equivalent to
\[
(n-1)-\tau_1\le p-1.
\]
The last endpoint condition is automatic because $\pi_n=n$.  Hence the final
position case contributes $T_{p-1,m-1}^{(m)}(n-1)$.

Conversely, any $\tau$ counted by $T_{p-1,m-1}^{(m)}(n-1)$ gives a valid
permutation $\tau n$: appending the global maximum at the final position
creates no new $132$-pattern, and the new adjacency and endpoint conditions are
exactly the inequalities displayed above.

The cases are disjoint and exhaustive by \Cref{lem:position}, so summing them
proves the recursion.
\end{proof}

We now convert the finite recursion into a finite linear system of ordinary
generating functions.  After passing to generating functions, the shifts
$n\mapsto n-k$ become multiplication by powers of $x$, while the finitely many
initial values contribute only polynomials.  Hence the finitely many state
generating functions satisfy a linear system whose coefficients are polynomials
in $x$ with rational coefficients.

\begin{corollary}[Rationality and effective computability]\label{cor:rational}
For every fixed $m\ge1$, the generating function $A^{(m)}(x)$ is rational.
Moreover, it is effectively computable from a linear system of size at most
$(m+1)^2$.
\end{corollary}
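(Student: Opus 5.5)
We pass from the recursion of \Cref{thm:recursion} to generating functions. For each state $(p,q)\in B_m^2$ define
\[
F_{p,q}(x)=\sum_{n\ge1}T_{p,q}^{(m)}(n)\,x^n .
\]
There are at most $(m+1)^2$ such series. For a negative threshold we set $F_{u,v}\equiv 0$. Since $\A_n^{(m)}=\mathcal{T}_{\infty,\infty}^{(m)}(n)$, we get
\[
A^{(m)}(x)=1+F_{\infty,\infty}(x).
\]
Rationality of $A^{(m)}$ therefore follows once every $F_{p,q}$ is shown to be rational.

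We multiply the general form of the recursion, valid for $n\ge2$, by $x^n$ and sum over $n\ge m+1$, where it takes the uniform shape with $k$ running from $1$ to $m$. This gives
\[
F_{p,q}(x)=\sum_{k=1}^{m}c_{k,p}\,x^k F_{m-k,\,q-k}(x)+x\,F_{p-1,\,m-1}(x)+P_{p,q}(x).
\]
The correction $P_{p,q}$ is a polynomial of degree at most $m$ with integer coefficients. It collects the terms $T_{p,q}^{(m)}(n)x^n$ for $n\le m$, minus the contributions of those small-$n$ terms that the right-hand sums $x^kF_{m-k,q-k}$ and $xF_{p-1,m-1}$ would otherwise wrongly include.

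Each index pair on the right either has a negative entry, in which case the term vanishes, or lies in $B_m^2$. This uses the conventions $\infty-r=\infty$ and $m-k\in\{0,\dots,m-1\}$. Hence the system closes on the finite state set. In vector form,
\[
(I-M(x))\,\mathbf F=\mathbf P(x),
\]
where $M(x)$ is an $(m+1)^2\times(m+1)^2$ matrix of polynomials with nonnegative integer coefficients and zero constant term. Every entry of $M(x)$ is a multiple of $x$, since $k\ge1$ and the append term carries a factor $x$.

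The key step is invertibility. Because $M(0)=0$, we have $\det(I-M(x))=1+O(x)$, which is a nonzero polynomial. Cramer's rule then expresses each $F_{p,q}$ as a ratio of polynomials with integer coefficients and denominator $\det(I-M(x))$.

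We must also check that the formal power series solution of the system coincides with the true counting series. Since $I-M$ is invertible in $\mathbb{Q}[[x]]$, the system has a unique solution there. The true series satisfy the system by \Cref{thm:recursion}, so they are that unique solution. Consequently $A^{(m)}=1+F_{\infty,\infty}$ is rational.

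For effective computability we proceed as follows. The coefficients $c_{k,p}$ are given explicitly in \Cref{rem:ckp-facts}. The initial values $T_{p,q}^{(m)}(n)$ for $n\le m$ can be found by brute-force enumeration of $S_n$, or from the recursion itself with the single base case $T_{p,q}(1)=1$. The linear system is then solved by exact Gaussian elimination over $\mathbb{Q}(x)$.

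The main obstacle is bookkeeping the boundary polynomial $P_{p,q}$ correctly, in particular the range $k\le\min(m,n-1)$ at small $n$. A cleaner route avoids it. Since $T(n-k)$ for $n-k\le0$ is naturally zero if we set $T(0)=0$, the recursion holds for all $n\ge2$ with the sum taken up to $m$. The only polynomial term then comes from $n=1$, namely $P_{p,q}=x$ for every state.

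Finally, the size bound $(m+1)^2$ is immediate, since $|B_m|=m+1$.
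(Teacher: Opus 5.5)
Your proposal is correct and follows essentially the same route as the paper. Your ``cleaner route'' (extending the $k$-sum to $m$ with $T(0)=0$, so that every row has the same inhomogeneous term $x$) is exactly how the paper obtains $M_m(x)F_m(x)=x\mathbf{1}_{(m+1)^2}$, after which both arguments use that $I-M(x)$ has constant term the identity and apply Cramer's rule; your remark that the counting series are the unique solution in $\mathbb{Q}[[x]]$ makes explicit a step the paper leaves implicit.
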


\begin{proof}
For $p,q\in B_m$, define
\begin{equation}
F_{p,q}^{(m)}(x)=\sum_{n\ge1}T_{p,q}^{(m)}(n)x^n.
\end{equation}
For notational convenience, set $T_{u,v}^{(m)}(n)=0$ for all $n\ge1$ whenever
one of the finite thresholds $u,v$ is negative, and set
$F_{u,v}^{(m)}(x)=0$ in this case.  We also use the convention
$\infty-r=\infty$ for every integer $r\ge0$.

By \Cref{thm:recursion}, for every $n\ge2$ and every $p,q\in B_m$,
\[
T_{p,q}^{(m)}(n)
=
\sum_{k=1}^{\min(m,n-1)}c_{k,p}T_{m-k,q-k}^{(m)}(n-k)
+
T_{p-1,m-1}^{(m)}(n-1).
\]

The initial contribution is the same for all states.  Indeed, for $n=1$ there
is only the permutation $1$.  It avoids $132$, the adjacency condition is
vacuous, and
\[
1-\pi_1=1-\pi_n=0.
\]
Thus
\[
T_{p,q}^{(m)}(1)=1
\]
for every $p,q\in B_m$.  Hence the left-hand side of the summed recursion is
\[
\sum_{n\ge2}T_{p,q}^{(m)}(n)x^n
=
F_{p,q}^{(m)}(x)-x.
\]
On the right-hand side, for each fixed $k$, the condition
$1\le k\le\min(m,n-1)$ is equivalent to $1\le k\le m$ and $n\ge k+1$.
Therefore
\begin{align*}
\sum_{n\ge2}
\sum_{k=1}^{\min(m,n-1)}
c_{k,p}T_{m-k,q-k}^{(m)}(n-k)x^n
&=
\sum_{k=1}^{m}
c_{k,p}x^k
\sum_{r\ge1}T_{m-k,q-k}^{(m)}(r)x^r \notag\\
&=
\sum_{k=1}^{m}c_{k,p}x^kF_{m-k,q-k}^{(m)}(x).
\end{align*}
Also,
\[
\sum_{n\ge2}T_{p-1,m-1}^{(m)}(n-1)x^n
=
xF_{p-1,m-1}^{(m)}(x).
\]
Thus
\[
F_{p,q}^{(m)}(x)
-
\sum_{k=1}^{m}c_{k,p}x^kF_{m-k,q-k}^{(m)}(x)
-
xF_{p-1,m-1}^{(m)}(x)
=
x.
\]

Let $F_m(x)$ be the column vector whose entries are the $(m+1)^2$ functions
$F_{p,q}^{(m)}(x)$, indexed by $B_m^2$ in any fixed order.  In addition, let
$\mathbf{1}_d$ $(d>0)$ denote the column vector of length $d$ whose entries are
all equal to $1$.  Then the preceding equations form a finite linear system
\begin{equation}
M_m(x)F_m(x)=x\mathbf{1}_{(m+1)^2}.
\end{equation}
The coefficient matrix \(M_m(x)\) has rows and columns indexed by $B_m^2$, and its entries
are
\begin{align}
M_{(p,q),(r,s)}(x)
={}&
\chi_{(p,q)=(r,s)}
-
\sum_{k=1}^{m}c_{k,p}x^k\chi_{(r,s)=(m-k,q-k)}
-
x\chi_{(r,s)=(p-1,m-1)},
\end{align}
where $\chi_E$ is equal to $1$ if the condition $E$ holds and $0$ otherwise.
If one of the pairs $(m-k,q-k)$ or $(p-1,m-1)$ lies outside $B_m^2$ because of
a negative finite threshold, the corresponding indicator is understood to be
zero.

The constant term of $M_m(x)$ is the identity matrix.  Hence
$\det M_m(x)$ has constant term $1$, and in particular it is not the zero
polynomial.  Therefore $M_m(x)$ is invertible over the field of rational
functions in $x$.  By Cramer's rule, every component of
\begin{equation}
F_m(x)=M_m(x)^{-1}x\mathbf{1}_{(m+1)^2}
\end{equation}
is a rational function of $x$.

Finally, the state $(\infty,\infty)$ imposes no endpoint restrictions, so
\begin{equation}
T_{\infty,\infty}^{(m)}(n)=a_n^{(m)}
\end{equation}
for all $n\ge1$.  Therefore
\begin{equation}
A^{(m)}(x)=1+F_{\infty,\infty}^{(m)}(x).
\end{equation}
Thus $A^{(m)}(x)$ is rational.  The construction is effective, since all
entries of $M_m(x)$ are explicitly determined by the finitely many coefficients
$c_{k,p}$, and the right-hand side vector $x\mathbf{1}_{(m+1)^2}$ is explicit.
\end{proof}

\begin{remark}[Relation to Nadler's finite-state formulation]
Nadler's conjecture is phrased in terms of finitely many configurations near the
left end of the permutation.  The state space above is different: it uses both
endpoints and consists of cumulative threshold states.  Thus \Cref{cor:rational} proves the enumerative finite-state consequence,
namely rationality of the generating function, by a two-sided
endpoint-state method.  No minimality of this state space is asserted.
\end{remark}

\subsection{Dependency graph and cyclic components}\label{subsec:dependency-graph}

The rationality statement above only uses the fact that the endpoint-state recursion is
finite.  For later arguments, however, we also need the internal structure
of this finite system.  In particular, we need to know which state equations can
feed back into one another after repeated substitution.  This dependence is
encoded by a directed graph on the endpoint states.  Its strongly connected
components give a block-triangular ordering of the linear system, and the
components containing directed cycles are precisely the feedback components.  These components
organize the explicit computations in \Cref{sec:examples}, give the
recurrence-order bound below,
and provide the input for the component-growth analysis in \Cref{sec:growth}, which
is used to estimate the exponential growth of $a_n^{(m)}$.

To make this dependence explicit, set
\begin{equation}\label{eq:W-def}
        M_m(x)=I-W_m(x).
\end{equation}
Thus $W_m(x)$ is the polynomial matrix whose entries are given by
\begin{align}\label{eq:W-entries}
W_{(p,q),(r,s)}^{(m)}(x)
={}&
\sum_{k=1}^{m}c_{k,p}x^k\chi_{(r,s)=(m-k,q-k)}
+
x\chi_{(r,s)=(p-1,m-1)}.
\end{align}
Equivalently, the state system can be written as
\begin{equation}\label{eq:F-W-system}
        F_m(x)=x\mathbf{1}_{(m+1)^2}+W_m(x)F_m(x).
\end{equation}
The powers of $x$ record the size lost in the recursive call: the term with
$x^k$ comes from a recursive call on an object of size $n-k$.

We define the dependency graph $\Gamma_m$ as follows. 
Its vertex set is $B_m^2$.
There is a directed edge $(r,s)\rightarrow (p,q)$
if $W_{(p,q),(r,s)}^{(m)}(x)$ is not the zero polynomial.  Thus the direction
is from a state appearing on the right-hand side of the equation to the state
being computed on the left-hand side.  A set of vertices is called strongly
connected if, for any two vertices in the set, there are directed paths in both
directions between them.  A strongly connected component is a maximal strongly
connected set.  We call a strongly connected component cyclic if it contains a
directed cycle.  Equivalently, a cyclic component either has more than one
vertex or is a singleton with a self-loop.  A singleton vertex without a
self-loop is also a strongly connected component in the graph-theoretic sense,
but it is not cyclic.  In the linear system, the cyclic components are exactly
the feedback components: these are the components in which a state can depend on itself
after finitely many substitutions.

For later use, if \(S\subseteq B_m^2\), let \(W_S(x)\) denote the principal submatrix of \(W_m(x)\) whose rows and columns are indexed by the states in \(S\).
When \(S=C\) is a strongly connected component of \(\Gamma_m\), we call \(W_C(x)\) the component matrix of \(C\). 

The recursion gives two kinds of edges in $\Gamma_m$,
and it will be useful to distinguish them.
A $k$-split edge, with $1\le k\le m$, is an edge produced by the term
$c_{k,p}x^kF^{(m)}_{m-k,q-k}(x)$
in the equation for $F^{(m)}_{p,q}(x)$.  Hence its source is
$(m-k,q-k)$ and its target is $(p,q)$, provided that the source lies in
$B_m^2$ and $c_{k,p}>0$.  This edge corresponds to the case in
\Cref{thm:recursion} in which the maximum element has a nonempty right block
and occurs in position $k$.  An append-edge is an edge produced by the term
$xF^{(m)}_{p-1,m-1}(x)$
in the equation for $F^{(m)}_{p,q}(x)$.  Its source is $(p-1,m-1)$ and its
target is $(p,q)$, provided that the source lies in $B_m^2$. 
This edge corresponds to the case in which
the maximum element is appended in the final position.

\begin{figure}[H]
\centering
\includegraphics[width=0.72\textwidth]{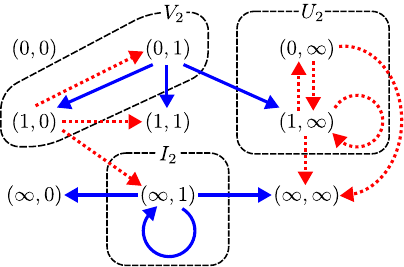}
\caption{Illustration of the dependency graph $\Gamma_2$.  The dotted red arcs
are split-edges; the two arcs starting at $(0,\infty)$ are $2$-split edges, and
the remaining dotted arcs are $1$-split edges.  The solid blue arcs are
append-edges.  The dashed enclosures indicate the cyclic strongly
connected components $U_2$, $V_2$, and $I_2$, identified in \Cref{prop:recurrent-blocks}.}
\label{fig:dependency2}
\end{figure}
\Cref{fig:dependency2} shows an example of $\Gamma_2$.
From this illustration, we can observe that
not every state generating function $F_{p,q}^{(2)}(x)$ is needed to determine
$F_{\infty,\infty}^{(2)}(x)$.
With the edge orientation above, only the states $(0, 1)$, $(1, 0)$,
$(0, \infty)$, $(1, \infty)$, and $(\infty, 1)$ can reach $(\infty, \infty)$.
These states are grouped into $U_2$, $V_2$, and $I_2$,
each of which is a cyclic strongly connected component.
The proposition below gives the corresponding classification for all $m\ge2$.

\begin{proposition}[Cyclic components of the dependency graph]\label{prop:recurrent-blocks}
Assume $m\ge2$.  In the dependency graph $\Gamma_m$, the cyclic strongly connected
components are the following:
\begin{align}
U_m&=\{(p,\infty):0\le p\le m-1\},\label{eq:U-def}\\
V_m&=\{(p,q):0\le p,q\le m-1,\ q<p\}
      \cup
      \{(p,m-1):0\le p\le m-2\},\label{eq:V-def}\\
I_m&=\{(\infty,m-1)\}.\label{eq:I-def}
\end{align}
The sets $U_m$ and $V_m$ are strongly connected components.  The component
$I_m$ is a single vertex with a self-loop.  Every vertex in
$B_m^2\setminus(U_m\cup V_m\cup I_m)$ is a singleton strongly connected
component without a self-loop.  Consequently, after ordering the states by
strongly connected components, the system \eqref{eq:F-W-system} is block
triangular, and the only cyclic components with more than one vertex are $V_m$ and $U_m$.
\end{proposition}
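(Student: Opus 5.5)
The plan is to read off the in-neighbours of each state directly from \eqref{eq:W-entries}, and then to argue about which states can ever occur as a \emph{source} of an edge. By \eqref{eq:W-entries} and \Cref{rem:ckp-facts}, the target $(p,q)$ has the following in-neighbours:
\begin{itemize}[label={}]
\item the $1$-split source $(m-1,q-1)$, always present when it lies in $B_m^2$;
\item the $k$-split sources $(m-k,q-k)$ for $2\le k\le m$, present exactly when $p\ge1$ and the source lies in $B_m^2$;
\item the append source $(p-1,m-1)$, present when $p\ge1$.
\end{itemize}

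First I classify the possible sources. A split source has finite first coordinate $m-k\le m-1$. If $q$ is finite, a split source $(m-k,q-k)$ satisfies $(m-k)-(q-k)=m-q\ge1$, so it lies in the part $\{q<p\}$ of $V_m$. If $q=\infty$, a split source lies in $U_m$. An append source is $(p-1,m-1)$, which lies in $V_m$ when $p\le m-1$ and equals $(\infty,m-1)$ when $p=\infty$. Hence every vertex with an out-edge lies in $U_m\cup V_m\cup I_m$. Every other vertex has no out-edges, so it lies on no cycle and is a singleton component without a self-loop. This already settles the last claim about $B_m^2\setminus(U_m\cup V_m\cup I_m)$.

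Next I show that the three sets do not mix. Out-edges from $(r,\infty)$ with $r$ finite are split edges, so their targets have second coordinate $\infty$. Out-edges from $(\infty,m-1)$ are append edges, so their targets have first coordinate $\infty$. Out-edges from $V_m$ go to targets with finite first coordinate; for split edges the second coordinate is also finite, and for append edges it is arbitrary.

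It follows that no cycle can pass through $U_m$ and leave it: any vertex reached from $U_m$ has the form $(\cdot,\infty)$, and the only such vertex that is a source lies in $U_m$ again. Similarly, any cycle through $(\infty,m-1)$ stays inside $\{(\infty,\cdot)\}$, and $(\infty,m-1)$ is the only source there. It has a self-loop, because $(\infty,m-1)$ is the append source of the target $(\infty,m-1)$. So $I_m$ is a singleton cyclic component. Likewise, any cycle through $V_m$ stays inside $V_m$.

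It remains to prove that $U_m$ and $V_m$ are each strongly connected; I expect this to be the main obstacle.

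For $U_m$ the argument is direct. The state $(m-1,\infty)$ feeds every $(p,\infty)$ via $k=1$. The state $(0,\infty)$ feeds $(1,\infty)$ via the $m$-split edge, where $c_{m,1}>0$ uses $m\ge2$. More generally, $(m-k,\infty)$ feeds $(1,\infty)$ via a $k$-split edge. The state $(1,\infty)$ then reaches $(m-1,\infty)$, either directly (if $m=2$) or through $(m-1,\infty)\to$ everything.

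For $V_m$ I route everything through the hub $(0,m-1)$.

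\emph{Forward direction.} The hub reaches every vertex of $V_m$:
\begin{itemize}[label={}]
\item the append chain $(0,m-1)\to(1,m-1)\to\cdots\to(m-2,m-1)$ reaches all of the second part of $V_m$;
\item the append edge $(p-1,m-1)\to(p,q)$ reaches every $(p,q)$ with $q<p$, $p\ge1$.
\end{itemize}

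\emph{Backward direction.} Every vertex of $V_m$ reaches the hub:
\begin{itemize}[label={}]
\item the $1$-split edge $(m-1,m-2)\to(0,m-1)$ returns to the hub, and $(m-2,m-1)\to(m-1,m-2)$ by append;
\item from $(p,q)$ with $q<p$, a $(m-p)$-split edge goes to a target with second coordinate $q+m-p$, which strictly increases the second coordinate. I choose first coordinate $m-1$ for this target when $q+m-p<m-1$, and first coordinate $0$ or $1$ when $q+m-p=m-1$ (using $p=m-1$, $k=1$ in the case $m=2$), so that the target lies in $V_m$ and $c_{k,\cdot}>0$;
\item iterating, using also the $1$-split step $(m-1,s)\to(0,s+1)$, eventually lands in $\{(\cdot,m-1)\}$, from which the append chain reaches $(m-2,m-1)$ and hence the hub.
\end{itemize}

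Block triangularity then follows by ordering the components topologically.
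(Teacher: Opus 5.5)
Your argument is essentially the paper's: you classify which states can be sources of edges, route $V_m$ through the hub $(0,m-1)$, and use $(m-1,\infty)$ as the hub of $U_m$. For $V_m$ the forward direction is the append chain, and the backward direction uses split edges that raise the second coordinate, then the append chain and $(m-2,m-1)\to(m-1,m-2)\to(0,m-1)$. Your explicit ``non-mixing'' step also makes precise something the paper leaves implicit, namely why $U_m$, $V_m$, $I_m$ are distinct components. Three statements need repair, though.

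First, the claim that out-edges from $V_m$ go to targets with finite first coordinate is false. Since $c_{k,\infty}>0$ for every $k$, a split edge from $(r,s)$ with $s<r$ also reaches $(\infty,s+m-r)$. For instance, $(1,0)\to(\infty,1)$ when $m=2$; this is the term $xF^{(2)}_{1,0}$ in the row of $(\infty,1)$. Whenever $s=r-1$, such an edge goes into $I_m$. Your conclusion survives, because you have already shown that everything reachable from $U_m$ has second coordinate $\infty$ and everything reachable from $I_m$ has first coordinate $\infty$, so neither can return to $V_m$. Derive ``cycles through $V_m$ stay in $V_m$'' from those two facts instead.

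Second, for $m\ge3$ your step ``$(1,\infty)$ reaches $(m-1,\infty)$ through $(m-1,\infty)\to$ everything'' is circular. The missing edge is of the type you already used: $(m-k,\infty)$ has a $k$-split edge to $(p,\infty)$ for every $p\ge1$. In particular it reaches $(m-1,\infty)$, since $c_{k,m-1}=C_{k-1}>0$. So every vertex of $U_m$ goes directly to $(m-1,\infty)$, which is the paper's route.

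Third, the ``$1$-split step $(m-1,s)\to(0,s+1)$'' stays in $V_m$ only when $s=m-2$. For $s<m-2$ the state $(0,s+1)$ has no out-edges, so a path through it dies. Your preceding rule already handles this correctly (take target first coordinate $m-1$ while the new second coordinate is below $m-1$), so drop that phrase.
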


\begin{proof}
We divide the vertices according to whether their coordinates are finite or
infinite.  Call a vertex $(r,s)$ finite--finite if both coordinates are finite,
that is, if $0\le r,s\le m-1$.

First consider a finite--finite vertex $(r,s)$.  We determine exactly when it
can be the source of an edge.  Suppose that $(r,s)$ is the source of a
$k$-split edge.  Then, for some target state $(p,q)$, the source identity is
\[
        (r,s)=(m-k,q-k).
\]
Thus $k=m-r$ and $q=s+k=s+m-r$.  Since $q$ must lie in
$\{0,1,\ldots,m-1\}$, this is possible only if
\[
        s+m-r\le m-1,
\]
or equivalently $s<r$.  Conversely, if $s<r$, then $k=m-r$ satisfies
$1\le k\le m$, and $q=s+k$ lies in $\{0,1,\ldots,m-1\}$.  Moreover there is
some target first threshold $p$ with $c_{k,p}>0$: if $k=1$, then $c_{1,p}=1$
for every $p\in B_m$; if $k\ge2$, then choosing $p=1$ works, because the
132-avoiding permutation of length $k-1$ that starts with its maximum has
$k-\sigma_1=1$.  Hence a finite--finite vertex is the source of a $k$-split edge
for some $k$ if and only if $s<r$.

Next suppose that the finite--finite vertex $(r,s)$ is the source of an
append-edge.  Then, for some target state $(p,q)$, the source identity is
\[
        (r,s)=(p-1,m-1).
\]
Since $p$ is finite in this case, this is possible if and only if
\[
        s=m-1,
        \qquad
        0\le r\le m-2.
\]
Conversely, every such $(r,m-1)$ is the source of append-edges.

It follows that the finite--finite vertices which can be sources of edges are
precisely the vertices in
\[
V_m=
\{(r,s):0\le r,s\le m-1,\ s<r\}
\cup
\{(r,m-1):0\le r\le m-2\}.
\]
The remaining finite--finite vertices are
\[
        \{(r,s):0\le r\le s\le m-2\}\cup\{(m-1,m-1)\}.
\]
They have no outgoing edges at all, and hence each of them is a singleton
strongly connected component without a self-loop.

We now prove that $V_m$ is strongly connected.  Put $b=(0,m-1)$.  We first
show that $b$ reaches every vertex of $V_m$.  If the target to reach is $b$ itself, there is
nothing to prove.  Otherwise let the target be $(r,s)\in V_m$ with $r\ge1$.
Starting at $b$, use append-edges
\[
(0,m-1)\to(1,m-1)\to\cdots\to(r-1,m-1),
\]
omitting this displayed path when $r=1$, and then use the append-edge
\[
        (r-1,m-1)\to(r,s).
\]
This is allowed because $r-1\le m-2$.
In \Cref{fig:V_m}, these paths of append-edges are illustrated by solid blue arcs.
\begin{figure}[H]
\centering
\includegraphics[width=1.0\textwidth]{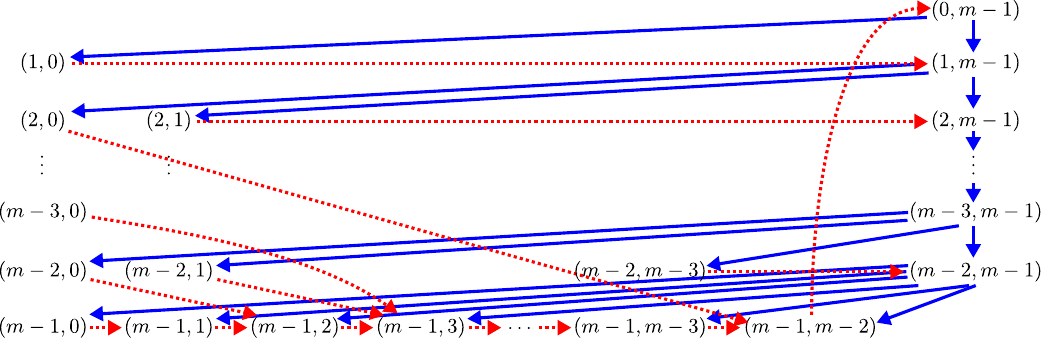}
\caption{The finite--finite feedback component $V_m$.  The vertices shown are the
states $(r,s)$ with $0\le s<r\le m-1$ and the states $(r,m-1)$ with
$0\le r\le m-2$.  Only the arrows
used in the proof of strong connectivity are drawn: solid blue arrows are
append-edges, and dotted red arrows are $k$-split.}
\label{fig:V_m}
\end{figure}

Conversely, every vertex of $V_m$ reaches $b$.  First consider a vertex of the
form $(r,m-1)$ with $0\le r\le m-2$.  If $r<m-2$, use append-edges to reach
$(m-2,m-1)$.  Then use the append-edge
\[
        (m-2,m-1)\to(m-1,m-2),
\]
and finally use the $1$-split edge with target first threshold $0$ to obtain
\[
        (m-1,m-2)\to(0,m-1)=b.
\]
This also covers $m=2$, where $(m-2,m-1)=b$.

It remains to consider a vertex $(r,s)$ with $s<r$.  If $s\le r-2$, then the
$(m-r)$-split edge with target first threshold $m-1$ gives
\[
        (r,s)\to(m-1,s+m-r),
\]
where $s+m-r\le m-2$.  Repeatedly using $1$-split edges with target first threshold
$m-1$ increases the second coordinate until we reach $(m-1,m-2)$; the preceding
edge $(m-1,m-2)\to b$ then finishes the path.  If instead $s=r-1$, there are
two cases.  When $r=m-1$, the vertex is already $(m-1,m-2)$, so the edge just
described sends it directly to $b$.  When $r\le m-2$, the $(m-r)$-split edge with
target first threshold $1$ gives
\[
        (r,r-1)\to(1,m-1),
\]
and we have already shown that vertices of the form $(t,m-1)$ reach $b$. 
In \Cref{fig:V_m}, these paths of $k$-split edges are illustrated by dotted red arcs.
Thus every vertex of $V_m$ reaches $b$, and $V_m$ is strongly connected.

Next consider vertices whose second coordinate is $\infty$ and whose first
coordinate is finite.  These are exactly the vertices of
\[
        U_m=\{(r,\infty):0\le r\le m-1\}.
\]
From $(m-1,\infty)$, the $1$-split edges reach every $(p,\infty)$ with
$0\le p\le m-1$.  Conversely, for any $0\le r\le m-1$, the $(m-r)$-split edge with
target first threshold $m-1$ sends
\[
        (r,\infty)\to(m-1,\infty).
\]
For $r=m-1$ this is a self-loop.  Hence $U_m$ is strongly connected.

Finally consider vertices whose first coordinate is $\infty$.  Such a vertex
cannot be the source of a $k$-split edge, because every $k$-split edge has source first
coordinate $m-k$, which is finite.  It can be the source of an append-edge only
when it is $(\infty,m-1)$, corresponding to the choice $p=\infty$.  Thus
$I_m=\{(\infty,m-1)\}$ is a singleton component with a self-loop.  The same
vertex also has append-edges to $(\infty,q)$ for all $q\in B_m$, but no vertex
$(\infty,q)$ with $q\ne m-1$ has any outgoing edge.  Therefore all remaining
first-$\infty$ vertices are singleton components without self-loops.

Combining the finite--finite case, the finite--$\infty$ case, and the
first-$\infty$ case gives the stated classification of cyclic strongly
connected components.  The component decomposition of a finite directed graph
always gives a block triangular ordering of its adjacency matrix, and the same
ordering gives a block triangular form for the polynomial matrix $W_m(x)$.  The
last assertion follows.
\end{proof}

The classification in \Cref{prop:recurrent-blocks} has the following immediate
quantitative consequence.  
Acyclic singleton components contribute only \(1\times1\) identity blocks $(1)$ to $I-W_m(x)$, so only the cyclic components can contribute to the determinant of the finite linear system.  
A crude degree count therefore gives an explicit recurrence-order bound.

\begin{corollary}[A recurrence-order bound]
\label{cor:recurrence-order}
\normalfont
For $m=1$, the sequence $a_n^{(1)}$ satisfies an eventual homogeneous linear recurrence with constant coefficients of order at most \(1\).
For every $m\ge2$, the sequence $a_n^{(m)}$ satisfies an eventual homogeneous linear recurrence with constant coefficients of order at most 
\begin{equation}
        d_m
        =m|U_m|+m|V_m|+1
        =m^2+\frac{m(m-1)(m+2)}{2}+1.
\end{equation}
The number \(d_m\) is only an upper bound on the minimal recurrence order, since cancellations may occur in the rational generating function.
In particular, $d_m=O(m^3)$.
\end{corollary}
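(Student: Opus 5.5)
Via Cramer's rule, I will write $F_{\infty,\infty}^{(m)}(x)=P(x)/D(x)$ with $D(x)=\det M_m(x)$, then bound the degrees of $D$ and of the numerator. For a rational function $P/D$ with $D(0)=1$, the coefficients satisfy, for all $n>\max(\deg P,\deg D)$ (after reducing the fraction if desired), the homogeneous recurrence of order $\deg D$ read off from $D$. Equivalently, they satisfy eventually a recurrence of order at most $\deg D$. So the task is to bound $\deg D$, and to note that the numerator degree only affects where the recurrence starts ("eventual"). Adding the constant $1$ to pass from $F_{\infty,\infty}^{(m)}$ to $A^{(m)}$ changes only the numerator.

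\textbf{Step 1: reduce $\det M_m$ to the cyclic components.} By \Cref{prop:recurrent-blocks}, ordering states by strongly connected components makes $M_m(x)=I-W_m(x)$ block triangular. Hence
\[
\det M_m(x)=\prod_C \det\bigl(I-W_C(x)\bigr)
\]
over all components $C$. For an acyclic singleton, $W_C=0$ (no self-loop), so the factor is $1$. For $m\ge2$ this leaves $\det(I-W_{U_m})\det(I-W_{V_m})\det(I-W_{I_m})$.

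\textbf{Step 2: degree bounds.} Every entry of $W_m(x)$ is a polynomial of degree at most $m$, since the split terms carry $x^k$ with $k\le m$ and the append term carries $x$. Expanding the determinant of an $s\times s$ matrix with entries of degree $\le m$ gives degree at most $ms$. So $\deg\det(I-W_{U_m})\le m|U_m|$ and $\deg\det(I-W_{V_m})\le m|V_m|$. For $I_m=\{(\infty,m-1)\}$, the only self-loop is the append-edge, which has weight $x$. I must check that no split edge contributes a self-loop there; this holds because split sources have finite first coordinate. So that factor is $1-x$, of degree $1$. Summing gives $d_m=m|U_m|+m|V_m|+1$. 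Counting, $|U_m|=m$, and $|V_m|=\binom{m}{2}+(m-1)=\frac{(m-1)(m+2)}{2}$, which yields the closed form and $d_m=O(m^3)$.

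\textbf{Step 3: the case $m=1$ and conclusion.} For $m=1$ we have $B_1=\{0,\infty\}$, and the four states can be inspected directly. The $1$-split edges have sources $(0,q-1)$ with weight $c_{1,p}x=x$. The append edges have sources $(p-1,0)$, nonzero only for $p=\infty$, giving the source $(\infty,0)$. A direct check of the cycles shows the determinant is a product of factors of the form $1-x$. Only one such factor is relevant to $(\infty,\infty)$, or $a_n^{(1)}$ is seen directly to be eventually constant (indeed $a_n^{(1)}$ counts only the identity and the decreasing permutation patterns allowed), giving order $\le1$.

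I expect the main obstacle to be the bookkeeping in this $m=1$ case and making the "eventual" claim precise: the numerator degree may exceed $\deg D$, so the recurrence holds only for $n$ beyond the numerator degree. The general degree count itself is routine.
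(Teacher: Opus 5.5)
Your proposal is correct and follows essentially the same route as the paper: you factor $\det M_m(x)$ block-triangularly into the factors from $U_m$, $V_m$, and $I_m$, bound each multi-vertex factor by degree $m|C|$ plus $1-x$ from $I_m$, and use that the reduced denominator of $A^{(m)}(x)$ divides $\det M_m(x)$. For $m=1$, rely on the direct fact $a_n^{(1)}=2$ for $n\ge2$, as the paper does, rather than your remark that only one factor $1-x$ is relevant: both self-loop singletons $(0,\infty)$ and $(\infty,0)$ feed $(\infty,\infty)$ and $\det M_1(x)=(1-x)^2$, so the order drops to $1$ only because $F_{\infty,\infty}^{(1)}(x)=x+2x^2/(1-x)$ combines the two contributions over a common factor.
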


\begin{proof}
For $m=1$, every nonempty $1$-bounded permutation is monotone, so
$a_n^{(1)}=2$ for all $n\ge2$ and the claimed order-one recurrence follows.
Assume $m\ge2$.
After ordering the states by strongly connected components, the matrix
$M_m(x)=I-W_m(x)$ is block triangular.  A singleton component without a self-loop
contributes the diagonal block $(1)$ and hence contributes no factor to the
determinant.  By \Cref{prop:recurrent-blocks}, the cyclic components are $U_m$,
$V_m$, and $I_m=\{(\infty,m-1)\}$.  
Therefore, using the component-matrix notation introduced above,
\[
\det M_m(x)
=
\det(I-W_{U_m}(x))\det(I-W_{V_m}(x))\det(I-W_{I_m}(x)).
\]
Every entry of $W_m(x)$ has degree at most $m$.  Thus
\[
\deg\det(I-W_{U_m}(x))\le m|U_m|,
\qquad
\deg\det(I-W_{V_m}(x))\le m|V_m|.
\]
The component $I_m$ is the one-state component with matrix $(x)$, so
$\det(I-W_{I_m}(x))=1-x$ and has degree $1$.  Hence
\[
\deg\det M_m(x)\le m|U_m|+m|V_m|+1.
\]
The denominator of $A^{(m)}(x)=1+F_{\infty,\infty}^{(m)}(x)$ after cancellation
divides $\det M_m(x)$.  If a rational generating function has denominator of
degree at most $d$ and nonzero constant term, then its coefficients satisfy an
eventual constant-coefficient linear recurrence of order at most $d$.  This
proves the stated bound.
\end{proof}

\section{Explicit cases}\label{sec:examples}
This section illustrates how the general endpoint-state system is used in
concrete cases.  The case \(m=1\) serves as a degenerate boundary case and
checks the conventions.  The case \(m=2\) recovers Nadler's rational generating
function from the present state system.  The case \(m=3\) is the first genuinely
branching case and provides an explicit rational formula used later in the
asymptotic discussion.
\subsection{The case \texorpdfstring{$m=1$}{m=1}}

When $m=1$, a permutation satisfying $|\pi_{i+1}-\pi_i|\le1$ must move by
steps of size exactly $1$ whenever $n\ge2$.  Hence, for $n\ge2$, it is either
$12\cdots n$ or $n\cdots21$; for $n=1$ these two descriptions coincide.
Both monotone permutations avoid $132$. 
Since the empty permutation gives
$a_0^{(1)}=1$, we have
\begin{equation}
a_n^{(1)}=
\begin{cases}
1, & n=0,\\
1, & n=1,\\
2, & n\ge2.
\end{cases}
\end{equation}
Therefore
\begin{equation}
A^{(1)}(x)=1+x+2x^2+2x^3+\cdots=\frac{1+x^2}{1-x}.
\end{equation}

The same answer is obtained from the finite-state system of
\Cref{cor:rational}.  Here $B_1=\{0,\infty\}$, and the recursion in
\Cref{thm:recursion} is
\begin{equation}
\label{eq:recursion1}
T_{p,q}^{(1)}(n)=c_{1,p}T_{0,q-1}^{(1)}(n-1)+T_{p-1,0}^{(1)}(n-1) 
=T_{0,q-1}^{(1)}(n-1)+T_{p-1,0}^{(1)}(n-1)
\end{equation}
for $n\ge2$ and $p,q\in B_1$.
For $p,q\in B_1$, we have $T_{p,q}^{(1)}(1)=1$.  More generally, under the
extended-threshold convention, $T_{u,v}^{(1)}(1)=0$ if $u<0$ or $v<0$.
Order the state generating functions as
\[
F_1(x)=
\begin{pmatrix}
F_{0,0}^{(1)}(x)\\
F_{0,\infty}^{(1)}(x)\\
F_{\infty,0}^{(1)}(x)\\
F_{\infty,\infty}^{(1)}(x)
\end{pmatrix}.
\]
From \eqref{eq:recursion1}, we obtain
\[
F_{0,0}^{(1)}(x)
= T_{0,0}^{(1)}(1)x + \sum_{n\ge2}T_{0,0}^{(1)}(n)x^n
= x,
\]

\begin{align*}
F_{0,\infty}^{(1)}(x)
&= T_{0,\infty}^{(1)}(1) x + \sum_{n\ge2}T_{0,\infty}^{(1)}(n)x^n
= x + \sum_{n\ge2}T_{0,\infty}^{(1)}(n-1)x^n \notag \\
&= x + x \sum_{n\ge1}T_{0,\infty}^{(1)}(n)x^n
= x + x F_{0,\infty}^{(1)}(x),
\end{align*}
\begin{align*}
F_{\infty, 0}^{(1)}(x)
&= T_{\infty,0}^{(1)}(1) x + \sum_{n\ge2}T_{\infty, 0}^{(1)}(n)x^n
= x + \sum_{n\ge2}T_{\infty, 0}^{(1)}(n-1)x^n \notag \\
&= x + x \sum_{n\ge1}T_{\infty, 0}^{(1)}(n)x^n
= x + x F_{\infty, 0}^{(1)}(x),
\end{align*}
and
\begin{align*}
F_{\infty, \infty}^{(1)}(x)
&= \sum_{n\ge1}T_{\infty, \infty}^{(1)}(n)x^n
= T_{\infty, \infty}^{(1)}(1) x + \sum_{n\ge2}\left(T_{0, \infty}^{(1)}(n-1) + T_{\infty, 0}^{(1)}(n-1) \right) x^n \notag \\
&= x + x \sum_{n\ge1}T_{0, \infty}^{(1)}(n) x^n + x \sum_{n\ge1}T_{\infty, 0}^{(1)}(n) x^n
= x + x F_{0, \infty}^{(1)}(x) + x F_{\infty, 0}^{(1)}(x).
\end{align*}
Therefore, the linear system for $m=1$ is
\[
M_1(x)F_1(x)=x \mathbf{1}_4,
\]
where
\[
M_1(x)=
\begin{pmatrix}
1&0&0&0\\
0&1-x&0&0\\
0&0&1-x&0\\
0&-x&-x&1
\end{pmatrix}.
\]
Solving gives
\[
F_{\infty,\infty}^{(1)}(x)=\frac{x+x^2}{1-x}.
\]
Since $A^{(1)}(x)=1+F_{\infty,\infty}^{(1)}(x)$, this again gives
\begin{equation}\label{eq:m1-genfun}
A^{(1)}(x)=\frac{1+x^2}{1-x}.
\end{equation}

\subsection{The case \texorpdfstring{$m=2$}{m=2}}

We next recover the known $m=2$ case \cite{Nadler2026} from the finite-state
system.  Here
\[
B_2=\{0,1,\infty\}.
\]
The left-block coefficients are
\begin{align*}
c_{1,p}&=1,\\
c_{2,p}&=\chi_{p\ge1},
\end{align*}
where $\chi_{p\ge1}=1$ for $p=1$ and $p=\infty$, and
$\chi_{p\ge1}=0$ for $p=0$.  Thus the large-$n$ form of
\Cref{thm:recursion}, valid for $n\ge3$, is
\begin{equation}\label{eq:m2rec}
T_{p,q}^{(2)}(n)
=
T_{1,q-1}^{(2)}(n-1)
+
\chi_{p\ge1}T_{0,q-2}^{(2)}(n-2)
+
T_{p-1,1}^{(2)}(n-1)
\end{equation}
for $p,q\in B_2$.  As before, $\infty-r=\infty$, and any term with a
negative finite threshold is interpreted as zero.

For $p,q\in B_2$, define
\[
F_{p,q}^{(2)}(x)=\sum_{n\ge1}T_{p,q}^{(2)}(n)x^n.
\]
We also set $F_{u,v}^{(2)}(x)=0$ whenever one of the finite thresholds
$u,v$ is negative.  Applying the generating-function identity of
\Cref{cor:rational} gives
\begin{equation}\label{eq:m2-state-eq}
F_{p,q}^{(2)}(x)
-
xF_{1,q-1}^{(2)}(x)
-
\chi_{p\ge1}x^2F_{0,q-2}^{(2)}(x)
-
xF_{p-1,1}^{(2)}(x)
=
x.
\end{equation}

By \Cref{prop:recurrent-blocks}, the cyclic components with more than one vertex are
\[
V_2=\{(0,1),(1,0)\},
\qquad
U_2=\{(0,\infty),(1,\infty)\}.
\]
There is also the singleton self-loop component $I_2=\{(\infty,1)\}$.  We solve
these blocks in the block-triangular order $V_2$, then $I_2$, then $U_2$, and
finally the unrestricted state $(\infty,\infty)$.

The $V_2$-block is
\begin{align*}
F_{0,1}^{(2)}(x)&=x+xF_{1,0}^{(2)}(x),\\
F_{1,0}^{(2)}(x)&=x+xF_{0,1}^{(2)}(x).
\end{align*}
Solving gives
\begin{equation}\label{eq:m2-V-sol}
        F_{0,1}^{(2)}(x)=F_{1,0}^{(2)}(x)=\frac{x}{1-x}.
\end{equation}
The singleton state $(\infty,1)$ then satisfies
\[
(1-x)F_{\infty,1}^{(2)}(x)=x+xF_{1,0}^{(2)}(x),
\]
and hence
\begin{equation}\label{eq:m2-I-sol}
        F_{\infty,1}^{(2)}(x)=\frac{x}{(1-x)^2}.
\end{equation}

Next we solve the $U_2$-block.  The two equations are
\begin{align*}
F_{0,\infty}^{(2)}(x)&=x+xF_{1,\infty}^{(2)}(x),\\
F_{1,\infty}^{(2)}(x)&=x+xF_{1,\infty}^{(2)}(x)
+x^2F_{0,\infty}^{(2)}(x)+xF_{0,1}^{(2)}(x).
\end{align*}
Substituting \eqref{eq:m2-V-sol} into this system gives
\begin{align}
F_{1,\infty}^{(2)}(x)
&=
\frac{x+x^3-x^4}{(1-x)(1-x-x^3)},\label{eq:m2-U-sol1}\\
F_{0,\infty}^{(2)}(x)
&=
\frac{x(1-x+x^2)}{(1-x)(1-x-x^3)}.\label{eq:m2-U-sol0}
\end{align}
Finally, the row for $(\infty,\infty)$ is
\[
F_{\infty,\infty}^{(2)}(x)
=
x+xF_{1,\infty}^{(2)}(x)+x^2F_{0,\infty}^{(2)}(x)+xF_{\infty,1}^{(2)}(x).
\]
Using \eqref{eq:m2-I-sol}, \eqref{eq:m2-U-sol1}, and \eqref{eq:m2-U-sol0}, we obtain
\begin{equation}
F_{\infty,\infty}^{(2)}(x)
=
\frac{x-x^2+2x^3-3x^4+x^5-x^6}
{(1-x)^2(1-x-x^3)}.
\end{equation}
Since the state $(\infty,\infty)$ imposes no endpoint restrictions, we have
$A^{(2)}(x)=1+F_{\infty,\infty}^{(2)}(x)$.  Therefore
\begin{equation}
\label{eq:m2gf}
A^{(2)}(x)
=
\frac{1-2x+2x^2-x^4-x^6}{(1-x)^2(1-x-x^3)}.
\end{equation}
The first values are
$1,1,2,5,8,12,18,26,37,53,76,109,\ldots$.

\subsection{The case \texorpdfstring{$m=3$}{m=3}}\label{subsec:m3}

We now specialize the finite-state recursion to $m=3$.  We follow the same
generating-function procedure as in the cases $m=1$ and $m=2$, but we use the
component structure of \Cref{prop:recurrent-blocks} instead of displaying the full
$16\times16$ matrix.  
Here
\[
B_3=\{0,1,2,\infty\}.
\]
For $m=3$, the cyclic components with more than one vertex are
\[
V_3=\{(0,2),(1,0),(1,2),(2,0),(2,1)\},
\qquad
U_3=\{(0,\infty),(1,\infty),(2,\infty)\}.
\]
There is also the singleton self-loop component $I_3=\{(\infty,2)\}$.  We solve
first the $V_3$-block, then $I_3$, then the $U_3$-block, and finally the
unrestricted state $(\infty,\infty)$.

The left-block coefficients are
\begin{align*}
c_{1,p}&=1,\\
c_{2,p}&=\chi_{p\ge1},\\
c_{3,p}&=c_p,
\end{align*}
where
\[
c_p=
\begin{cases}
0, & p=0,\\
1, & p=1,\\
2, & p=2\text{ or }p=\infty.
\end{cases}
\]

Thus the recursion of \Cref{thm:recursion}, specialized to $m=3$, is
\begin{equation}\label{eq:m3rec-full}
T_{p,q}^{(3)}(n)
=
\sum_{k=1}^{\min(3,n-1)}c_{k,p}
T_{3-k,q-k}^{(3)}(n-k)
+
T_{p-1,2}^{(3)}(n-1)
\end{equation}
for $n\ge2$ and $p,q\in B_3$.  Equivalently, for $n\ge4$ this becomes
\[
T_{p,q}^{(3)}(n)
=
T_{2,q-1}^{(3)}(n-1)
+\chi_{p\ge1}T_{1,q-2}^{(3)}(n-2)
+c_pT_{0,q-3}^{(3)}(n-3)
+T_{p-1,2}^{(3)}(n-1).
\]
As in Section~2, $\infty-r=\infty$, and any term with a negative finite threshold is
interpreted as zero.

For $p,q\in B_3$, define
\[
F_{p,q}^{(3)}(x)=\sum_{n\ge1}T_{p,q}^{(3)}(n)x^n.
\]
We also set $F_{u,v}^{(3)}(x)=0$ whenever one of the finite thresholds
$u,v$ is negative.

We now derive the state equation from \eqref{eq:m3rec-full}.  Since the unique
permutation of length $1$ is counted by every state, we have
\[
T_{p,q}^{(3)}(1)=1
\]
for every $p,q\in B_3$.  Hence
\[
\sum_{n\ge2}T_{p,q}^{(3)}(n)x^n
=
F_{p,q}^{(3)}(x)-x.
\]
The $k=1$ term in \eqref{eq:m3rec-full} gives
\[
\sum_{n\ge2}T_{2,q-1}^{(3)}(n-1)x^n
=
x\sum_{r\ge1}T_{2,q-1}^{(3)}(r)x^r 
=
xF_{2,q-1}^{(3)}(x).
\]
The $k=2$ term appears exactly for $n\ge3$, and therefore gives
\[
\sum_{n\ge3}\chi_{p\ge1}T_{1,q-2}^{(3)}(n-2)x^n
=
\chi_{p\ge1}x^2\sum_{r\ge1}T_{1,q-2}^{(3)}(r)x^r 
=
\chi_{p\ge1}x^2F_{1,q-2}^{(3)}(x).
\]
Similarly, the $k=3$ term appears exactly for $n\ge4$, and gives
\[
\sum_{n\ge4}c_pT_{0,q-3}^{(3)}(n-3)x^n
=
c_px^3\sum_{r\ge1}T_{0,q-3}^{(3)}(r)x^r 
=
c_px^3F_{0,q-3}^{(3)}(x).
\]
Finally, the append term gives
\[
\sum_{n\ge2}T_{p-1,2}^{(3)}(n-1)x^n
=
x\sum_{r\ge1}T_{p-1,2}^{(3)}(r)x^r 
=
xF_{p-1,2}^{(3)}(x).
\]
Combining these four contributions, we obtain
\begin{equation}\label{eq:m3-state-equation}
F_{p,q}^{(3)}(x)
-
xF_{2,q-1}^{(3)}(x)
-
\chi_{p\ge1}x^2F_{1,q-2}^{(3)}(x)
-
c_px^3F_{0,q-3}^{(3)}(x)
-
xF_{p-1,2}^{(3)}(x)
=
x.
\end{equation}
This is the $m=3$ finite-state linear system.  It has $16$ equations.  The block
order stated above,
\[
        V_3\longrightarrow (\infty,2)\longrightarrow U_3\longrightarrow (\infty,\infty),
\]
explains why only the rows displayed below are needed to determine
$F_{\infty,\infty}^{(3)}(x)$.

We first solve the $V_3$-block.  From
\eqref{eq:m3-state-equation} with $(p,q)=(1,0),(2,0),(2,1)$, we obtain
\[
F_{1,0}^{(3)}(x)=x\left(F_{0,2}^{(3)}(x)+1\right),
\]
\[
F_{2,0}^{(3)}(x)=x\left(F_{1,2}^{(3)}(x)+1\right),
\]
\[
F_{2,1}^{(3)}(x)=x+xF_{2,0}^{(3)}(x)+xF_{1,2}^{(3)}(x) 
=x(1+x)\left(F_{1,2}^{(3)}(x)+1\right).
\]
The rows $(0,2)$ and $(1,2)$ of \eqref{eq:m3-state-equation} give
\[
F_{0,2}^{(3)}(x)
=
x+xF_{2,1}^{(3)}(x) 
=
x+x^2(1+x)\left(F_{1,2}^{(3)}(x)+1\right),
\]
and
\begin{align*}
F_{1,2}^{(3)}(x)
&=
x+xF_{2,1}^{(3)}(x)+x^2F_{1,0}^{(3)}(x)+xF_{0,2}^{(3)}(x) \notag\\
&=
x+x^2(1+x)\left(F_{1,2}^{(3)}(x)+1\right)
+x^3\left(F_{0,2}^{(3)}(x)+1\right)
+xF_{0,2}^{(3)}(x).
\end{align*}
Equivalently,
\begin{equation}\label{eq:m3-finite-system}
\begin{pmatrix}
1 & -x^2(1+x)\\
-x(1+x^2) & 1-x^2(1+x)
\end{pmatrix}
\begin{pmatrix}
F_{0,2}^{(3)}(x)\\
F_{1,2}^{(3)}(x)
\end{pmatrix}
=
\begin{pmatrix}
x+x^2+x^3\\
x+x^2+2x^3
\end{pmatrix}.
\end{equation}
The determinant of this reduced $V_3$-block system is
\[
\Delta_V(x)=1-x^2-2x^3-x^4-x^5-x^6.
\]
Solving \eqref{eq:m3-finite-system} gives
\begin{align*}
F_{0,2}^{(3)}(x)
&=
\frac{x(1+x+x^2+x^4+x^5)}
{1-x^2-2x^3-x^4-x^5-x^6},\\
F_{1,2}^{(3)}(x)
&=
\frac{x(1+x)(1+x+2x^2+x^4)}
{1-x^2-2x^3-x^4-x^5-x^6}.
\end{align*}

The singleton component $I_3=\{(\infty,2)\}$ is needed for the unrestricted state.  Its row in \eqref{eq:m3-state-equation} gives
\[
(1-x)F_{\infty,2}^{(3)}(x)
=
x+x^2(1+x)\left(F_{1,2}^{(3)}(x)+1\right)
+x^3\left(F_{0,2}^{(3)}(x)+1\right).
\]
Substituting the two expressions above, we obtain
\[
F_{\infty,2}^{(3)}(x)
=
\frac{x+x^2+2x^3+x^4+x^5-x^7}
{(1-x)(1-x^2-2x^3-x^4-x^5-x^6)}.
\]

We next solve the $U_3$-block, the states with second threshold $\infty$.  The row
$(0,\infty)$ of \eqref{eq:m3-state-equation} gives
\begin{equation}\label{eq:m3-f0inf}
F_{0,\infty}^{(3)}(x)=x\left(F_{2,\infty}^{(3)}(x)+1\right).
\end{equation}
The rows $(1,\infty)$ and $(2,\infty)$ give
\begin{align*}
F_{1,\infty}^{(3)}(x)
&=
x+xF_{2,\infty}^{(3)}(x)
+x^2F_{1,\infty}^{(3)}(x)
+x^3F_{0,\infty}^{(3)}(x)
+xF_{0,2}^{(3)}(x),\\
F_{2,\infty}^{(3)}(x)
&=
x+xF_{2,\infty}^{(3)}(x)
+x^2F_{1,\infty}^{(3)}(x)
+2x^3F_{0,\infty}^{(3)}(x)
+xF_{1,2}^{(3)}(x).
\end{align*}
Eliminating $F_{0,\infty}^{(3)}(x)$ by \eqref{eq:m3-f0inf}, we get the
$2\times2$ system
\begin{equation}\label{eq:m3-infinite-system}
\begin{pmatrix}
1-x^2 & -(x+x^4)\\
-x^2 & 1-x-2x^4
\end{pmatrix}
\begin{pmatrix}
F_{1,\infty}^{(3)}(x)\\
F_{2,\infty}^{(3)}(x)
\end{pmatrix}
=
\begin{pmatrix}
x\left(F_{0,2}^{(3)}(x)+1\right)+x^4\\
x\left(F_{1,2}^{(3)}(x)+1\right)+2x^4
\end{pmatrix}.
\end{equation}
The determinant of the reduced $U_3$-block coefficient matrix in \eqref{eq:m3-infinite-system} is
\[
(1-x^2)(1-x-2x^4)-x^2(x+x^4)
=
(1+x)(x^5-x^4-x^3+x^2-2x+1).
\]

Finally, the unrestricted state itself is obtained from the row
$(\infty,\infty)$:
\begin{align*}
F_{\infty,\infty}^{(3)}(x)
&=
x+xF_{2,\infty}^{(3)}(x)
+x^2F_{1,\infty}^{(3)}(x)
+2x^3F_{0,\infty}^{(3)}(x)
+xF_{\infty,2}^{(3)}(x) \notag\\
&=
x\left(F_{\infty,2}^{(3)}(x)+1\right)
+x^2F_{1,\infty}^{(3)}(x)
+(x+2x^4)F_{2,\infty}^{(3)}(x)
+2x^4.
\end{align*}
Solving \eqref{eq:m3-infinite-system} and substituting the finite-boundary
states found above gives
\begin{equation}\label{eq:m3gf}
A^{(3)}(x)=1+F_{\infty,\infty}^{(3)}(x)=\frac{N_3(x)}{D_3(x)},
\end{equation}
where
\begin{align}
N_3(x)={}&
1-x-x^2+x^3+4x^4-4x^6-3x^7-3x^8 \notag\\
&{}-5x^9-3x^{10}+2x^{11}+2x^{12},
\end{align}
and
\begin{align}
D_3(x)={}&
1-2x-x^2+x^3+x^4+2x^5+2x^6+2x^7 \notag\\
&{}-4x^8-2x^9+x^{10}-2x^{11}+x^{13}.
\end{align}
The first values of $a_n^{(3)}$ are
\[
1,1,2,5,14,28,55,108,214,412,787,1497,2841,5364,10088,\ldots.
\]
The denominator gives the following recurrence for $n\ge13$:
\begin{align}
a_n^{(3)}={}&
2a_{n-1}^{(3)}+a_{n-2}^{(3)}-a_{n-3}^{(3)}-a_{n-4}^{(3)}
-2a_{n-5}^{(3)}-2a_{n-6}^{(3)}-2a_{n-7}^{(3)} \notag\\
&{}+4a_{n-8}^{(3)}+2a_{n-9}^{(3)}-a_{n-10}^{(3)}
+2a_{n-11}^{(3)}-a_{n-13}^{(3)}.
\end{align}

\section{Exponential growth from cyclic components}
\label{sec:growth}
The purpose of this section is to determine the \(n\)th-root growth of \(a_n^{(m)}\) for fixed \(m\).
For a nonnegative sequence $b=(b_n)_{n\ge0}$, write
\begin{align*}
\underline\alpha(b)&=\liminf_{n\to\infty}b_n^{1/n},\\
\overline\alpha(b)&=\limsup_{n\to\infty}b_n^{1/n}.
\end{align*}
For the present family set
\begin{align*}
\underline\alpha_m&=
\liminf_{n\to\infty}(a_n^{(m)})^{1/n},\\
\overline\alpha_m&=
\limsup_{n\to\infty}(a_n^{(m)})^{1/n}.
\end{align*}
When these two quantities coincide, we denote their common value as
$\alpha_m$ and call it the exponential growth constant for the fixed value of
$m$. 
If $R_m$ denotes the radius of convergence of $A^{(m)}(x)$, then the
Cauchy--Hadamard formula gives
\begin{equation}
\overline\alpha_m=R_m^{-1}.
\end{equation}
Moreover, since the coefficients of $A^{(m)}(x)$ are nonnegative, Pringsheim's
theorem ensures that $x=R_m$ is a singularity of $A^{(m)}(x)$
\cite[Thm.~IV.6]{FlajoletSedgewick2009}. 
Since $A^{(m)}(x)$ is rational, this singularity is a pole.  
We call the poles of modulus $R_m$ dominant poles.

The radius of convergence determines the exponential rate only at the level of the limsup.
To prove a genuine \(n\)th-root limit, we must also exclude other poles on the circle \(|z|=R_m\): such competing dominant poles would contribute terms of the same exponential size and could produce periodic oscillations or cancellations.
Thus the problem is to identify the poles of \(A^{(m)}(x)\) with smallest modulus and to prove that the positive pole is the only pole on that circle.

By Cramer's rule applied to the finite-state system, the denominator of the reduced form of \(F^{(m)}_{\infty,\infty}(x)\) divides \(\det(I-W_m(x))\). 
Hence the dominant poles of \(A^{(m)}(x)=1+F^{(m)}_{\infty,\infty}(x)\) are among the zeros of this determinant.
The block-triangular decomposition of the state system reduces the possible sources of such poles to the cyclic strongly connected components of the dependency graph.
We begin by assigning to each cyclic component \(C\) a positive number
\(r_C\), called its component radius, as the solution of a Perron--Frobenius
spectral-radius equation.  We also define the component growth rate
\(\lambda_C\) as the reciprocal of \(r_C\).

We then study the zeros of the determinants \(\det(I-W_C(x))\) associated with a cyclic component \(C\).
The determinant has no zero in the disk \(|z|<r_C\). 
For the components that can attain the smallest component radius, the only zero on the boundary \(|z|=r_C\) is the positive zero \(z=r_C\).

By verifying that there exists a directed path from each of the cyclic components $\{U_m,V_m,I_m\}$ to the unrestricted state $(\infty, \infty)$ and that $I_m$ is not of minimal component radius, we finally show that the radius of convergence of \(A^{(m)}(x)\) is
\[
        \rho_m=\min\{r_{U_m},r_{V_m}\},
\]
and hence, for \(m\ge2\), the exponential growth constant is
\[
        \alpha_m=\rho_m^{-1}
        =\max\{\lambda_U(m),\lambda_V(m)\}, 
\]
where \(\lambda_U(m)=\lambda_{U_m}\) and \(\lambda_V(m)=\lambda_{V_m}\).

After proving this fixed-\(m\) result, we treat \(m=2\) and \(m=3\) separately.  
In these two cases the dominant pole is shown to be simple, which gives an explicit asymptotic equivalent. 
For a general \(m\), the comparison between \(\lambda_U(m)\) and \(\lambda_V(m)\) is the relevant issue for the simplicity: if the two are distinct, only one of \(U_m\) and \(V_m\) can be critical at the dominant radius, whereas a tie \(\lambda_U(m)=\lambda_V(m)\) would make both components critical at the same radius.
We leave the general comparison of these two rates open.
We finish with estimates uniform in \(m\) and with numerical values of the component growth rates.

\subsection{Component radii and component growth rates}
\label{subsec:component-singularities}

Recall that, for a strongly connected component \(C\) of \(\Gamma_m\), \(W_C(x)\) denotes the corresponding component matrix. 

After the states are ordered by the strongly connected components of
\(\Gamma_m\), the matrix \(W_m(x)\) is block triangular.  Equivalently,
\(I-W_m(x)\) is block triangular with diagonal blocks \(I-W_C(x)\), where
\(C\) runs over the strongly connected components of \(\Gamma_m\).  A
singleton component without a self-loop has \(W_C(x)=(0)\), and hence
contributes the diagonal block \((1)\) to \(I-W_m(x)\).  Thus, at the level
of the finite state system, possible nontrivial determinant factors can come
only from cyclic components.

For \(m\ge2\), \Cref{prop:recurrent-blocks} identifies the cyclic components
as
\[
        U_m,\qquad V_m,\qquad I_m=\{(\infty,m-1)\}.
\]
Consequently,
\[
        \det(I-W_m(x))
        =
        \det(I-W_{U_m}(x))
        \det(I-W_{V_m}(x))
        \det(I-W_{I_m}(x)).
\]

This subsection attaches to each cyclic component \(C\) a positive number \(r_C\), defined by a Perron--Frobenius spectral-radius equation.  
The number \(r_C\) and the reciprocal \(r_C^{-1}\) will be called the component radius and the component growth rate of \(C\), respectively.

Let \(C\) be a cyclic strongly connected component of \(\Gamma_m\).
For a square matrix $B$, let
\[
\spr(B)=\max\{|\lambda|:\lambda\text{ is an eigenvalue of }B\}
\]
be its spectral radius.

For $x>0$, the matrix $W_C(x)$ is nonnegative and irreducible.  With the edge
orientation used in \Cref{subsec:dependency-graph}, an entry
$(W_C(x))_{d,c}$ gives an edge $c\to d$.  Some matrix texts use the opposite
orientation for the directed graph of a matrix, but reversing all arrows does
not change strong connectivity.  Since every nonzero polynomial entry of
$W_C(x)$ is positive for $x>0$, the directed graph of $W_C(x)$ is strongly
connected, and hence $W_C(x)$ is irreducible.

We shall use the following finite-dimensional form of the Perron--Frobenius
theorem.

\begin{lemma}[Perron--Frobenius theorem]
\label{lem:perron-frobenius}
Let $B$ be a nonzero nonnegative irreducible square matrix, and let
$\rho=\spr(B)$.  
Then $\rho>0$ is an algebraically simple eigenvalue of $B$.  
There exist vectors $v>0$ and $u>0$ such
that
\[
        Bv=\rho v,\qquad u^\top B=\rho u^\top.
\]
These positive right and left Perron vectors are unique up to scalar
multiplication.  Moreover, every nonzero nonnegative right or left eigenvector
associated with $\rho$ is strictly positive.
\end{lemma}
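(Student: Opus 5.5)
The statement is the finite-dimensional Perron--Frobenius theorem, so the plan is the standard textbook proof. I will first reduce to a positive matrix. Since $B$ is irreducible of size $N$, the matrix $P=(I+B)^{N-1}$ is entrywise strictly positive. This holds because for any ordered pair of indices there is a directed path of length at most $N-1$ in the graph of $B$. The eigenvectors of $B$ and $P$ coincide, and eigenvalues correspond via $\lambda\mapsto(1+\lambda)^{N-1}$.

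\emph{Existence of a positive eigenvector.} On the simplex $\Delta=\{v\ge0:\sum v_i=1\}$, I will consider the Collatz--Wielandt function $r(v)=\min_{v_i>0}(Bv)_i/v_i$. Define $\rho^*=\sup_\Delta r$. The supremum is attained on the compact set $P(\Delta)$ of strictly positive vectors, because $r(Pv)\ge r(v)$, so I can take a maximizer $v$ there. If $Bv-\rho^*v\ge0$ were nonzero, applying $P$ would give a strictly positive vector $P(Bv-\rho^*v)$. Then $r(Pv)>\rho^*$, a contradiction, so $Bv=\rho^*v$. Moreover $v=\frac{1}{(1+\rho^*)^{N-1}}Pv>0$. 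For any eigenpair $Bw=\lambda w$, taking entrywise absolute values gives $B|w|\ge|\lambda||w|$, hence $|\lambda|\le r(|w|)\le\rho^*$. Therefore $\rho^*=\rho=\spr(B)$. Finally $\rho>0$, since $B\ne0$ and irreducibility force $Bv$ to have a positive coordinate. Applying the same argument to $B^\top$, which is also irreducible, gives $u>0$.

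\emph{Nonnegative eigenvectors are positive; uniqueness.} If $w\ge0$ is nonzero and $Bw=\rho w$, then $Pw=(1+\rho)^{N-1}w$ with $Pw>0$, so $w>0$. For geometric simplicity, take any real eigenvector $w$ for $\rho$ (real and imaginary parts of a complex one are themselves eigenvectors) and form $v-tw$ with $t$ chosen maximal so that $v-tw\ge0$. This is an eigenvector that is nonnegative with a zero coordinate. By the previous sentence it must vanish, so $w$ is a multiple of $v$. The same argument works for left eigenvectors.

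\emph{Algebraic simplicity.} This is the step needing most care. I will show $\rho$ is not a repeated root of the characteristic polynomial by ruling out a Jordan chain. Suppose $(B-\rho I)y=v$. Pairing with $u^\top$ gives $0=u^\top(B-\rho I)y=u^\top v>0$, a contradiction. Together with geometric simplicity, the Jordan block for $\rho$ is $1\times1$, so $\rho$ is algebraically simple.
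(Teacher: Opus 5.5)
Your proof is correct, but it does more than the paper, which gives no argument here and simply cites the standard theorem (Horn and Johnson, Ch.~8). You instead reproduce the classical Wielandt proof. It uses the positive matrix $P=(I+B)^{N-1}$, the Collatz--Wielandt function on the simplex, a sliding argument for geometric simplicity, and pairing with the left vector $u$ to rule out a Jordan chain.

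The citation is economical and yields the full theorem at once. Your route makes the paper self-contained and shows that only compactness, positivity of $P$, and pairing with a positive left eigenvector are needed. That pairing is the same device the paper uses later in \Cref{lem:interior-zero-exclusion} and \Cref{lem:boundary-zero-exclusion}, where $|y|\le W_C(|z|)|y|$ plays the role of your $B|w|\ge|\lambda||w|$. Incidentally, the algebraic simplicity you treat most carefully is never invoked later; the paper only needs the positive Perron vectors and the positivity of nonnegative $\rho$-eigenvectors.

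Two statements should be tightened. First, the eigenvectors of $B$ and $P$ need not coincide, since distinct eigenvalues of $B$ can have the same image under $\lambda\mapsto(1+\lambda)^{N-1}$. This does no harm, because you only use the valid direction: every $B$-eigenvector is a $P$-eigenvector. Second, a maximal $t$ with $v-tw\ge0$ exists only when $w$ has a positive entry. Replace $w$ by $-w$ if necessary and take $t=\min_{w_i>0}v_i/w_i$.
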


\begin{proof}
This is the standard Perron--Frobenius theorem for irreducible nonnegative
matrices; see, for example, Horn and Johnson \cite[Ch.~8]{HornJohnson2012}.
\end{proof}

\begin{lemma}[Component radius of a cyclic component]
\label{lem:component-singularity}
Let $C$ be a cyclic strongly connected component of $\Gamma_m$, and set
\[
\phi_C(x)=\spr(W_C(x))\qquad(x>0).
\]
Then there is a unique positive real number $r_C\in\left(0,1\right]$ such that
\[
\phi_C(r_C)=1.
\]
Moreover, $r_C$ is the smallest positive zero of
\[
\det(I-W_C(x)).
\]
\end{lemma}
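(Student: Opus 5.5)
We argue with the function $\phi_C(x)=\spr(W_C(x))$ on $(0,\infty)$. We show it is continuous, strictly increasing, tends to $0$ as $x\to0^+$, and satisfies $\phi_C(1)\ge1$. Existence and uniqueness of $r_C$ then follow from the intermediate value theorem and strict monotonicity.

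\textbf{Continuity and the limit at $0$.} The eigenvalues of $W_C(x)$ depend continuously on its entries, and these entries are polynomials in $x$. Every entry of $W_m(x)$ is a polynomial with zero constant term. Hence $W_C(x)\to0$ entrywise as $x\to0^+$, and so $\phi_C(x)\to0$.

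\textbf{Strict monotonicity.} Each nonzero entry of $W_C(x)$ is a polynomial with nonnegative coefficients and no constant term. So for $0<x<y$ every nonzero entry strictly increases, and $W_C(x)\le W_C(y)$ with strict inequality in at least one entry; here we use that $C$ is cyclic, so $W_C$ is not the zero matrix. For irreducible nonnegative matrices, the Perron--Frobenius theory of \Cref{lem:perron-frobenius} gives strict monotonicity of the spectral radius. Concretely, take the positive left Perron vector $u$ of $W_C(y)$ and the positive right Perron vector $v$ of $W_C(x)$. Then
\[
\phi_C(y)\,u^\top v = u^\top W_C(y)v > u^\top W_C(x)v = \phi_C(x)\,u^\top v,
\]
where the strict inequality holds because $u,v>0$ and the difference $W_C(y)-W_C(x)$ is nonnegative and nonzero.

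\textbf{The bound $\phi_C(1)\ge1$.} Since $C$ is cyclic, its graph contains a directed cycle. Every nonzero entry of $W_C(1)$ is a positive integer: the coefficients $c_{k,p}$ are nonnegative integers and $W_C(1)$ is a sum of such terms. Therefore $W_C(1)$ dominates entrywise the $0/1$ matrix $P$ of that cycle. The cycle matrix $P$ has spectral radius $1$, since $P^\ell$ has a positive diagonal entry for the cycle length $\ell$. Monotonicity of the spectral radius for nonnegative matrices then gives $\phi_C(1)\ge1$. Combining this with the previous two steps yields a unique $r_C\in(0,1]$ with $\phi_C(r_C)=1$.

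\textbf{$r_C$ is the smallest positive zero of $\det(I-W_C(x))$.} At $x=r_C$, $1$ is an eigenvalue of $W_C(r_C)$ because it is the Perron root, so $\det(I-W_C(r_C))=0$. Conversely, for $0<x<r_C$ we have $\spr(W_C(x))<1$, so $1$ is not an eigenvalue and $\det(I-W_C(x))\ne0$. Hence no smaller positive zero exists.

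The only delicate point is the strict monotonicity argument. It needs positivity of both Perron vectors and a nonzero increment of $W_C$, and these are exactly what irreducibility together with the cyclicity of $C$ supply.
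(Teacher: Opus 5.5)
Your proposal is correct and follows the paper's proof essentially step for step: continuity and $\phi_C(x)\to0$, strict monotonicity via $u^\top W_C(y)v>u^\top W_C(x)v$ with a left Perron vector $u$ of $W_C(y)$ and a right Perron vector $v$ of $W_C(x)$, and the same argument that $r_C$ is the smallest positive zero of $\det(I-W_C(x))$. The only variation is in proving $\phi_C(1)\ge1$. The paper notes that the nonzero irreducible integer matrix $W_C(1)$ has no zero row, so $W_C(1)\mathbf{1}_C\ge\mathbf{1}_C$, and pairs this with a left Perron vector; you instead dominate the $0/1$ matrix $P$ of a directed cycle and use monotonicity of the spectral radius, which is equally valid (the cleanest justification is $\spr(P)^\ell=\spr(P^\ell)\ge(P^\ell)_{ii}\ge1$).
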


\begin{proof}
The function $\phi_C(x)$ is continuous, because the entries of $W_C(x)$ depend continuously on $x$, and the eigenvalues of a finite matrix depend continuously on its entries \cite[Appendix~D]{HornJohnson2012}.  
All entries of $W_C(x)$ are polynomials in $x$ with nonnegative coefficients and zero constant term.  
Therefore $W_C(x)$ tends to the zero matrix as $x\downarrow0$, and hence $\phi_C(x)\to0$.  
If $0<x<y$, then $W_C(x)\le W_C(y)$
entrywise, and at least one entry is strictly smaller at $x$ than at $y$.
Let $v>0$ be a right Perron vector of $W_C(x)$, and let $u>0$ be a left Perron
vector of $W_C(y)$.  Then
\[
\phi_C(y)u^\top v
=u^\top W_C(y)v
>u^\top W_C(x)v
=\phi_C(x)u^\top v.
\]
Since $u^\top v>0$, we have $\phi_C(x)<\phi_C(y)$.  Thus $\phi_C$ is strictly
increasing on $(0,\infty)$.

It remains to show that $\phi_C(1)\ge1$.
Since $W_C(1)$ is a nonzero irreducible nonnegative integer matrix, no row of $W_C(1)$ is zero. 
Hence
\[
W_C(1)\mathbf{1}_C\ge\mathbf{1}_C.
\]
Let $u>0$ be a left Perron vector of $W_C(1)$. 
Then
\begin{align*}
\phi_C(1)u^\top\mathbf{1}_C
&=u^\top W_C(1)\mathbf{1}_C\\
&\ge u^\top\mathbf{1}_C.
\end{align*}
Since $u^\top\mathbf{1}_C>0$, we obtain $\phi_C(1)\ge1$.
Since $\phi_C$ is continuous and strictly increasing, there is a unique $r_C\in\left(0,1\right]$ satisfying $\phi_C(r_C)=1$.

At $x=r_C$, \Cref{lem:perron-frobenius} shows that $1$ is an eigenvalue of
$W_C(r_C)$.  Hence $\det(I-W_C(r_C))=0$.  Conversely, if $0<x<r_C$ and
$\det(I-W_C(x))=0$, then $1$ is an eigenvalue of $W_C(x)$.  Therefore
$\spr(W_C(x))\ge1$, contradicting $\phi_C(x)<\phi_C(r_C)=1$.  Thus
$r_C$ is the smallest positive zero of $\det(I-W_C(x))$.
\end{proof}

For a cyclic component $C$, we call the number $r_C$ from
\Cref{lem:component-singularity} the component radius of $C$.  We also set
\begin{equation}
        \lambda_C=r_C^{-1}
\end{equation}
and call it the component growth rate of $C$.
For $m\ge2$, \Cref{prop:recurrent-blocks} gives three cyclic components:
$V_m$, $U_m$, and $I_m=\{(\infty,m-1)\}$. 
For \(m\ge2\), we write
\[
        \lambda_U(m)=\lambda_{U_m}=r_{U_m}^{-1},
        \qquad
        \lambda_V(m)=\lambda_{V_m}=r_{V_m}^{-1}.
\]
As will be shown later in \Cref{thm:general-growth-constant}, the radius of
convergence of \(A^{(m)}(x)\) is identified with
\[
        \min\{r_{U_m},r_{V_m}\}.
\]
The next lemma gives the preliminary component comparison needed for this reduction: it removes the singleton component \(I_m\) from the set of possible minimal-radius components and also handles the exceptional component \(V_2\).

\begin{lemma}[Preliminary dominance reductions]
\label{lem:preliminary-dominance-reductions}
For every \(m\ge2\), one has
\[
        r_{I_m}=1
        \qquad\text{and}\qquad
        r_{U_m}<1.
\]
Moreover, for \(m=2\), one has \(r_{V_2}=1\).  Consequently, the singleton
component \(I_m\) is never of minimal component radius, and \(V_2\) is not of
minimal component radius when \(m=2\).  Thus any cyclic component of minimal
radius is either \(U_m\), or \(V_m\) with \(m\ge3\).
\end{lemma}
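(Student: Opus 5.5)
The plan is to treat the three claims separately, using \Cref{lem:component-singularity}: $r_C$ is the unique $x\in(0,1]$ with $\spr(W_C(x))=1$. It is also the smallest positive zero of $\det(I-W_C(x))$.

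For $I_m=\{(\infty,m-1)\}$ we read off the component matrix from \eqref{eq:W-entries}. The only edge from $(\infty,m-1)$ to itself is the append-edge with $p=\infty$, since $\infty-1=\infty$. No split-edge can have source $(\infty,m-1)$, because split sources have finite first coordinate $m-k$. Hence $W_{I_m}(x)=(x)$ and $\det(I-W_{I_m}(x))=1-x$, whose only zero is $1$, so $r_{I_m}=1$.

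For $V_2=\{(0,1),(1,0)\}$ we use the equations already displayed in the $m=2$ computation. The row of $(0,1)$ contains only $xF_{1,0}$: the split term with $k=2$ has negative threshold, and the append term needs $p-1=-1$. The row of $(1,0)$ contains only $xF_{0,1}$, coming from the append-edge; the split terms have $q-k<0$. Restricted to $V_2$, therefore,
\[
W_{V_2}(x)=\begin{pmatrix}0&x\\ x&0\end{pmatrix},
\qquad
\det(I-W_{V_2}(x))=1-x^2 .
\]
Its smallest positive zero is $1$, so $r_{V_2}=1$.

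For $r_{U_m}<1$, strict monotonicity of $\phi_{U_m}$ means it suffices to show $\spr(W_{U_m}(1))>1$. I would use the vertex $(m-1,\infty)$. Its row in $W_{U_m}(1)$ contains
\begin{enumerate}[label=(\roman*)]
\item the $1$-split self-loop $c_{1,m-1}=1$;
\item the $2$-split term from source $(m-2,\infty)$, with coefficient $c_{2,m-1}=1$;
\item the append-edge from $(m-2,\infty)$. Since $q=\infty$, the source $(p-1,m-1)=(m-2,m-1)$ lies in $V_m$, not $U_m$, so this term does not count.
\end{enumerate}
So the row sum at $(m-1,\infty)$ is at least $2$. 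Every row of the irreducible integer matrix $W_{U_m}(1)$ is nonzero, so $W_{U_m}(1)\mathbf 1\ge\mathbf 1$ with strict inequality in at least one coordinate. Pairing with a strictly positive left Perron vector $u$ gives
\[
\phi(1)\,u^\top\mathbf 1 = u^\top W_{U_m}(1)\mathbf 1 > u^\top\mathbf 1 ,
\]
so $\phi(1)>1$ and hence $r_{U_m}<1$.

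Alternatively, one could exhibit a $1\times1$ principal submatrix $(1)$, the self-loop, together with a further entry and invoke strict monotonicity of the spectral radius for irreducible matrices. The row-sum argument is cleaner.

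The consequences then follow. Every cyclic radius lies in $(0,1]$, and $r_{U_m}<1=r_{I_m}$, so the minimum radius is strictly below $1$. This excludes $I_m$ for every $m\ge2$, and excludes $V_2$ when $m=2$. The only subtle step is correctly identifying which terms of \eqref{eq:W-entries} stay inside $U_m$, in particular that the append-edge leaves $U_m$. The split coefficient with $k=2$, $p=m-1\ge1$ is positive by \Cref{rem:ckp-facts}, which gives the needed surplus.
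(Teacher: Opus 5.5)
Your proof is correct. For $I_m$ and $V_2$ you do exactly what the paper does, reading off $W_{I_m}(x)=(x)$ and $W_{V_2}(x)=\begin{pmatrix}0&x\\ x&0\end{pmatrix}$, and you check explicitly which terms survive.

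For $r_{U_m}<1$ your route differs from the paper's. The paper restricts $W_{U_m}(1)$ to the two states $(0,\infty)$ and $(m-1,\infty)$. This principal submatrix dominates $\begin{pmatrix}0&1\\ C_{m-1}&1\end{pmatrix}$, via the $1$-split edge and the $m$-split coefficient $c_{m,m-1}=C_{m-1}$. The paper then uses monotonicity of the spectral radius under entrywise domination and passage to principal submatrices, which gives the explicit bound $\spr(W_{U_m}(1))\ge\bigl(1+\sqrt{1+4C_{m-1}}\bigr)/2$.

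You instead sharpen the row-sum argument already used in the proof of \Cref{lem:component-singularity}. Every row of $W_{U_m}(1)$ has sum at least $1$. The row of $(m-1,\infty)$ has sum at least $2$, because of the self-loop and the $2$-split edge from $(m-2,\infty)$ with $c_{2,m-1}=1$. Pairing with a strictly positive left Perron vector turns this single surplus into $\phi_{U_m}(1)>1$.

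Your version is more self-contained: it needs no separate monotonicity fact for spectral radii, only $c_{1,m-1}=c_{2,m-1}=1$. The paper's version gives a quantitative lower bound that grows with $m$, which the lemma does not need.

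One wording slip in item (iii): the append-edge into $(m-1,\infty)$ has source $(m-2,m-1)$, not $(m-2,\infty)$. It lies outside $U_m$ because an append source always has second coordinate $m-1$, whatever $q$ is. Discarding it is still correct.
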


\begin{proof}
The component $I_m$ has component matrix $(x)$, so $r_{I_m}=1$.

For $U_m$, consider the two states $(0,\infty)$ and $(m-1,\infty)$.  At
$x=1$, the corresponding principal submatrix dominates
\[
\begin{pmatrix}
0 & 1\\
C_{m-1} & 1
\end{pmatrix}.
\]
The entry $1$ in the upper right comes from the $1$-split edge, and the entry
$C_{m-1}$ in the lower left comes from the $m$-split edge with coefficient
$c_{m,m-1}=C_{m-1}$, using \Cref{rem:ckp-facts}.  This matrix has spectral
radius
\[
\frac{1+\sqrt{1+4C_{m-1}}}{2}>1.
\]
For nonnegative matrices, the spectral radius is monotone under entrywise
comparison.
Thus, a nonnegative principal submatrix has spectral radius at most that of the whole matrix, after embedding it by zeros into the full index set. 
Hence $\spr(W_{U_m}(1))>1$.  By the strict increase of $x\mapsto\spr(W_{U_m}(x))$, we have $r_{U_m}<1$.

For $m=2$, the component $V_2=\{(0,1),(1,0)\}$ has component matrix
\[
\begin{pmatrix}0&x\\ x&0\end{pmatrix},
\]
so $r_{V_2}=1$.
Since \(r_{U_2}<1\), the component \(V_2\) is not of
minimal component radius.  
The final assertion follows.
\end{proof}

\subsection{Zeros of component determinants}
\label{subsec:weighted-zero-exclusion}

The component radius \(r_C\) was defined using the positive real matrix
\(W_C(r_C)\).  We now show that this number also controls the complex zeros
of the component determinant
\[
        \det(I-W_C(z)).
\]
The first result uses only nonnegativity and excludes zeros in the open disk
\(|z|<r_C\).  This will give analyticity inside the eventual disk of
convergence.

However, controlling the open disk alone is not sufficient for the existence of the \(n\)th-root limit.
We must also rule out competing zeros on the boundary
\(|z|=r_C\) for the components that can attain the minimal radius.  Boundary
zeros are governed by the total weights of closed walks in the dependency
graph, so we introduce a weighted aperiodicity condition.

\begin{lemma}[No zeros inside the component-radius disk]
\label{lem:interior-zero-exclusion}
Let $C$ be a cyclic strongly connected component, and let $r_C$ be its component
radius.  
Then
\begin{equation}
        \det(I-W_C(z))\ne0
        \qquad\text{whenever } |z|<r_C.
\end{equation}
\end{lemma}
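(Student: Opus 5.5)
The plan is to bound the spectral radius of $W_C(z)$ for complex $z$ by that of the nonnegative matrix $W_C(|z|)$, and then invoke \Cref{lem:component-singularity}. Suppose, for contradiction, that $\det(I-W_C(z))=0$ for some $z$ with $|z|<r_C$. If $z=0$, then $W_C(0)$ is the zero matrix, because every entry of $W_C(x)$ is a polynomial with zero constant term. Hence $\det(I-W_C(0))=1\neq0$, and we may assume $0<|z|<r_C$. Then $1$ is an eigenvalue of $W_C(z)$, so there is a nonzero vector $w\in\mathbb C^{C}$ with $W_C(z)w=w$.

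The key entrywise bound comes from the form of the entries in \eqref{eq:W-entries}. Each entry of $W_C(z)$ is a polynomial $\sum_k a_k z^k$ with nonnegative coefficients $a_k$, so
\[
\bigl|(W_C(z))_{d,c}\bigr|\le (W_C(|z|))_{d,c}.
\]
Write $|w|$ for the vector of absolute values of the entries of $w$. Applying the triangle inequality to $w=W_C(z)w$ gives the componentwise inequality
\[
|w|\le W_C(|z|)\,|w|.
\]
Here $|w|$ is nonnegative and nonzero.

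Next, let $u>0$ be a left Perron vector of $W_C(|z|)$, supplied by \Cref{lem:perron-frobenius}. This lemma applies because $W_C(|z|)$ is nonnegative and irreducible for $|z|>0$, as noted before the statement of the Perron--Frobenius lemma. Pairing the inequality above with $u$ gives
\[
u^\top|w|\le u^\top W_C(|z|)|w|=\phi_C(|z|)\,u^\top|w|.
\]
Since $u>0$ and $|w|\ge0$ is nonzero, we have $u^\top|w|>0$, and therefore $\phi_C(|z|)\ge1$. On the other hand, \Cref{lem:component-singularity} shows that $\phi_C$ is strictly increasing and satisfies $\phi_C(r_C)=1$. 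Since $|z|<r_C$, this gives $\phi_C(|z|)<1$, which is a contradiction.

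No step is a real obstacle. The only points needing care are the case $z=0$, where irreducibility fails and which is handled separately above, and making sure that irreducibility of $W_C(|z|)$ really holds for every $|z|>0$. The latter is true because each nonzero polynomial entry has nonnegative coefficients and so is positive at positive arguments.
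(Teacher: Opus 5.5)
Your proposal is correct and follows essentially the same argument as the paper: you handle $z=0$ separately, then take a fixed vector $w$ of $W_C(z)$ and derive $|w|\le W_C(|z|)|w|$. Pairing this with a positive left Perron vector of $W_C(|z|)$ forces $\phi_C(|z|)\ge1$, which contradicts the strict monotonicity of $\phi_C$ and $\phi_C(r_C)=1$.
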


\begin{proof}
Throughout this proof, absolute values of matrices are taken entrywise.  
Suppose that $|z|<r_C$ and $\det(I-W_C(z))=0$. 
If $z=0$, this is impossible because $W_C(0)=0$.
Hence $|z|>0$.
Choose $y\ne0$ such that $W_C(z)y=y$. 
Since the entries of $W_C(x)$ have nonnegative
coefficients,
\[
|y|=|W_C(z)y|\le W_C(|z|)|y|.
\]
Let $u>0$ be a left Perron vector of $W_C(|z|)$. Then
\begin{align*}
u^\top|y|
&\le u^\top W_C(|z|)|y|\\
&=\phi_C(|z|)u^\top|y|.
\end{align*}
Since $u^\top|y|>0$, \(\phi_C(|z|)\ge1\).
This contradicts $\phi_C(|z|)<\phi_C(r_C)=1$.
Thus $\det(I-W_C(z))\ne0$ for $|z|<r_C$.
\end{proof}

The preceding lemma excludes zeros strictly inside the component-radius circle. 
It does not, by itself, exclude additional zeros on the boundary.
The periodicity in the weighted dependency graph can produce boundary zeros different from the positive point \(r_C\). 
We now record the weighted period that detects this obstruction.

\begin{definition}[Weighted period of a cyclic component]
\label{def:weighted-period}
Let $C$ be a cyclic component of $\Gamma_m$, and let $W_C(x)$ be the principal
submatrix of $W_m(x)$ indexed by the states in $C$.  The weighted dependency graph
of $C$ is the directed multigraph obtained from the polynomial matrix $W_C(x)$ as
follows: if the coefficient of $x^j$ in the entry $(W_C(x))_{d,c}$ is positive,
then we put an edge $c\to d$ of weight $j$.  Equivalently, a $k$-split edge has
weight $k$, and an append-edge has weight $1$.

The total weight of a closed walk is the sum of the weights of its edges.  The
weighted period of $C$ is the greatest common divisor of the total weights of
all directed closed walks in the weighted dependency graph.  We say that $C$ is
weighted aperiodic if this weighted period is $1$.
\end{definition}

\begin{lemma}[Weighted aperiodicity of $U_m$ and of $V_m$ for $m\ge3$]
\label{lem:weighted-aperiodic}
For every $m\ge2$, the component $U_m$ is weighted aperiodic.  For every
$m\ge3$, the component $V_m$ is weighted aperiodic.
\end{lemma}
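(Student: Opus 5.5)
To prove weighted aperiodicity it suffices, in each component, to exhibit two closed walks whose total weights are coprime. The simplest choice is a closed walk of weight $1$, which forces the gcd to be $1$ immediately. Failing that, I will use two closed walks of consecutive weights. Since the weighted period is the gcd over all closed walks, any such pair settles the claim.

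\emph{The component $U_m$.} The proof of \Cref{prop:recurrent-blocks} shows that the $1$-split edge with target first threshold $m-1$ sends $(m-1,\infty)$ to $(m-1,\infty)$. This is a self-loop produced by the term $c_{1,m-1}xF_{m-1,\infty-1}$, and we have $\infty-1=\infty$ and $c_{1,m-1}=1>0$. It is therefore a closed walk of weight $1$, so the weighted period of $U_m$ is $1$ for every $m\ge2$. As a check, the same proof also gives the $m$-split edge $(0,\infty)\to(m-1,\infty)$ followed by the $1$-split edge $(m-1,\infty)\to(0,\infty)$, of total weight $m+1$. This is not needed once the self-loop is available.

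\emph{The component $V_m$ for $m\ge3$.} Let $b=(0,m-1)$. The proof of \Cref{prop:recurrent-blocks} gives the cycle
\[
(0,m-1)\to(1,m-1)\to\cdots\to(m-2,m-1)\to(m-1,m-2)\to(0,m-1).
\]
It consists of $m-1$ append-edges, each of weight $1$, and one $1$-split edge. The last edge is valid because its source is $(m-k,q-k)$ with $k=1$ and $q=m-1$, so the source is $(m-1,m-2)$, the target first threshold is $0$, and $c_{1,0}=1$. Its total weight is therefore $m$.

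For a second cycle, I would use a $2$-split edge, which has weight $2$. This requires a target first threshold $p\ge1$, since $c_{2,p}>0$ exactly when $p\ge1$, and this is where $m\ge3$ enters. A $2$-split edge has source $(m-2,q-2)$ and target $(p,q)$. Take $q=m-1$ and $p=1$, so the edge is $(m-2,m-3)\to(1,m-1)$. Its source lies in $V_m$ because $m-3<m-2$ and $m-3\ge0$ when $m\ge3$. Now close the walk as follows:
\begin{itemize}
\item use append-edges $(1,m-1)\to\cdots\to(m-3,m-1)$ (the path is empty if $m=3$);
\item use the append-edge $(m-3,m-1)\to(m-2,m-3)$, whose source is $(p-1,m-1)$ with $p=m-2$ and whose target second coordinate $q=m-3$ is arbitrary.
\end{itemize}
This closed walk uses $m-3$ append-edges in total plus the $2$-split edge, so its weight is $(m-3)+2=m-1$.

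The weights $m$ and $m-1$ are coprime, so the weighted period of $V_m$ is $1$.

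The only delicate step is checking that each edge used really exists: the coefficient $c_{k,p}$ must be positive, and every intermediate state must lie in $B_m^2$ and in $V_m$. This is exactly why the $2$-split construction fails for $m=2$, where the state $(m-2,m-3)=(0,-1)$ has a negative threshold and so does not exist. That is consistent with $V_2$ being a bipartite $2$-cycle of period $2$.
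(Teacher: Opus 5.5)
Your argument for $U_m$ and your weight-$m$ closed walk in $V_m$ are correct and match the paper. The problem is your second closed walk in $V_m$ at $m=3$, a case the statement includes.

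The append-edge path $(1,m-1)\to(2,m-1)\to\cdots\to(m-3,m-1)$ makes sense only when $m-3\ge 1$, because each append-edge raises the first coordinate by one. That path is empty when $m=4$, not when $m=3$. For $m=3$ you would have to get from $(1,2)$ to $(0,2)$, which append-edges cannot do. Your count of ``$m-3$ append-edges'' then gives $0$, so the ``closed walk'' reduces to the single $2$-split edge $(1,0)\to(1,2)$, which is not closed.

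No walk of your shape exists for $m=3$. You need a $2$-split target $(p,m-1)$ with $p\ge1$, since $c_{2,0}=0$, followed by an increasing append path up to $(m-3,m-1)$. Together these force $1\le p\le m-3$, that is, $m\ge4$. So your argument proves weighted aperiodicity of $V_m$ only for $m\ge4$. There the walk $(m-2,m-3)\to(1,m-1)\to\cdots\to(m-3,m-1)\to(m-2,m-3)$ does exist and has weight $m-1$. Your closing remark therefore misplaces where the hypothesis on $m$ is used.

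The repair is short. For $m=3$, take the closed walk $(1,2)\to(2,1)\to(1,2)$: an append-edge followed by the $1$-split edge with target first threshold $1$. It has weight $2$, which is coprime to $3$.

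Alternatively, use the paper's uniform second walk
$(0,m-1)\to(1,0)\to(1,m-1)\to\cdots\to(m-2,m-1)\to(m-1,m-2)\to(0,m-1)$. It goes through the $(m-1)$-split edge $(1,0)\to(1,m-1)$, which exists because $c_{m-1,1}>0$ for $m\ge3$, and it has weight $2m-1$. Since $\gcd(m,2m-1)=1$, this covers every $m\ge3$ at once. For $m=3$ it is $(0,2)\to(1,0)\to(1,2)\to(2,1)\to(0,2)$, of weight $5$.
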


\begin{proof}
In $U_m$, the state $(m-1,\infty)$ has a $1$-split self-loop, whose weight is $1$.
Thus the weighted period of $U_m$ is $1$.

Now assume $m\ge3$ and put $b=(0,m-1)\in V_m$.  The component $V_m$ contains the
closed walk
\[
(0,m-1)\to(1,m-1)\to\cdots\to(m-2,m-1)\to(m-1,m-2)\to(0,m-1),
\]
in which every edge has weight $1$.  This closed walk has total weight $m$.
It also contains the closed walk
\[
(0,m-1)\to(1,0)\to(1,m-1)\to\cdots\to(m-2,m-1)\to(m-1,m-2)\to(0,m-1).
\]
The first edge is an append-edge of weight $1$.  The edge
$(1,0)\to(1,m-1)$ is an $(m-1)$-split edge of weight $m-1$; it exists because
$c_{m-1,1}>0$ for $m\ge3$ by \Cref{rem:ckp-facts}.  The remaining part from
$(1,m-1)$ back to $b$ has total weight $m-1$.  Hence the second closed walk has
total weight $2m-1$.  Since $\gcd(m,2m-1)=1$, the weighted period of $V_m$ is
$1$.
\end{proof}

\begin{lemma}[Uniqueness of the boundary zero]
\label{lem:boundary-zero-exclusion}
Let $C$ be a weighted aperiodic cyclic component.  If
\[
\det(I-W_C(z))=0
\qquad\text{and}\qquad
|z|=r_C,
\]
then $z=r_C$.
\end{lemma}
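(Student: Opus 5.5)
Write $z=r_Ce^{i\theta}$ and suppose $\det(I-W_C(z))=0$. Choose $y\ne0$ with $W_C(z)y=y$. The plan is to rerun the inequality chain from \Cref{lem:interior-zero-exclusion}, now at $|z|=r_C$, and force every inequality in it to be an equality. Those equalities will pin down the phases of $y$, and weighted aperiodicity will then force $\theta\equiv0$.

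\emph{Step 1 (equality in the modulus bound).} Coefficientwise nonnegativity gives $|y|=|W_C(z)y|\le W_C(r_C)|y|$. Let $u>0$ be a left Perron vector of $W_C(r_C)$, whose spectral radius is $1$. Pairing with $u$ yields $u^\top|y|\le u^\top W_C(r_C)|y|=u^\top|y|$. Since $u>0$, this forces $W_C(r_C)|y|=|y|$. So $|y|$ is a nonnegative eigenvector for the Perron root, and by \Cref{lem:perron-frobenius} it is strictly positive. Write $y_c=|y_c|e^{i\psi_c}$ for each $c\in C$.

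\emph{Step 2 (phase alignment).} For each $d$, expand
\[
y_d=\sum_{c}\sum_j w_{d,c,j}\,r_C^j e^{ij\theta}\,|y_c|e^{i\psi_c},
\]
where $w_{d,c,j}\ge0$ is the coefficient of $x^j$ in $(W_C(x))_{d,c}$. Step 1 gives $|y_d|=\sum_{c,j}w_{d,c,j}r_C^j|y_c|$, so equality holds in the triangle inequality. Every term with $w_{d,c,j}>0$ therefore has the same argument as $y_d$, that is,
\[
j\theta+\psi_c\equiv\psi_d \pmod{2\pi}
\]
for every weighted edge $c\to d$ of weight $j$. This uses $|y_c|>0$ for all $c$, which Step 1 supplied.

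\emph{Step 3 (use aperiodicity).} Sum the congruence of Step 2 along any closed walk in the weighted dependency graph. The $\psi$'s telescope, leaving $\ell\theta\equiv0\pmod{2\pi}$, where $\ell$ is the walk's total weight. The total weights have gcd $1$, so an integer combination of them equals $1$. Hence $\theta\equiv0$, and $z=r_C$.

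The main obstacle is Step 2. Equality must be extracted term by term for each individual monomial $x^j$, not merely entrywise, because an entry with several monomials could a priori cancel internally. The plan handles this by expanding each entry into monomials before applying the triangle inequality. The same expansion is what makes the weighted multigraph, rather than the plain graph $\Gamma_m$, the relevant object for the gcd. One also needs all $|y_c|>0$ so that every edge imposes a constraint, and that is exactly what Perron--Frobenius positivity provides.
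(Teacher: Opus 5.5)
Your proposal is correct and follows essentially the same argument as the paper. Equality in the Perron pairing forces $W_C(r_C)|y|=|y|$ with $|y|>0$, and equality in the monomial-wise triangle inequality gives $\psi_d\equiv\psi_c+j\theta$ along each weighted edge. Summing these congruences around closed walks whose total weights have gcd $1$ then forces $\theta\equiv0 \pmod{2\pi}$.
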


\begin{proof}
Write $z=r_Ce^{i\theta}$, and let $y\ne0$ satisfy $W_C(z)y=y$.  Absolute values
of vectors are taken entrywise.  Then
\[
|y|=|W_C(z)y|\le W_C(r_C)|y|.
\]
Let $u>0$ be a left Perron vector of $W_C(r_C)$.  Since
$\spr(W_C(r_C))=1$, we have $u^\top W_C(r_C)=u^\top$.  Multiplying the preceding
inequality by $u^\top$ gives
\[
u^\top|y|\le u^\top W_C(r_C)|y|=u^\top|y|.
\]
Thus equality holds.  Since $u>0$ and $|y|\le W_C(r_C)|y|$, this implies
\[
|y|=W_C(r_C)|y|.
\]
By irreducibility of $W_C(r_C)$, $|y|$ is strictly positive.

The equality above means that equality holds in the triangle inequality in each
coordinate of $W_C(z)y=y$.  Consider an edge $c\to d$ of weight $j$ in the
weighted dependency graph.  The corresponding summand in the $d$th coordinate has
argument
\[
\arg y_c+j\theta.
\]
Expanding each polynomial entry into its monomial contributions, equality in the
triangle inequality forces all nonzero summands contributing to the $d$th
coordinate to have the same argument as $y_d$, modulo $2\pi$.  Therefore
\[
\arg y_d\equiv \arg y_c+j\theta \pmod {2\pi}
\]
for every weighted edge $c\to d$ of weight $j$.

Summing these congruences along a closed walk of total weight $L$ gives
\[
L\theta\equiv0\pmod {2\pi}.
\]
If $C$ is weighted aperiodic, the greatest common divisor of all such $L$ is
$1$, so $\theta\equiv0\pmod {2\pi}$.  Hence $z=r_C$.
\end{proof}

\subsection{Output-accessibility}
\label{subsec:output-accessibility}

The component radii are properties of cyclic components in the state
system.  
To contribute to the unrestricted generating function $F_{\infty, \infty}^{(m)}(x)$, a component must
also feed into the output state $(\infty,\infty)$.

\begin{definition}[Output-accessible component]
\label{def:output-accessible}
Let $o=(\infty,\infty)$ be the output state.  A cyclic component $C$ of
$\Gamma_m$ is called output-accessible if some vertex of $C$ reaches $o$ by a
directed path in $\Gamma_m$.  Equivalently, there exist $c\in C$ and $t\ge0$ such that
\[
\left(W_m(x)^t\right)_{o,c}\neq0
\]
as a polynomial in $x$.  This is a graph condition saying that, after
back-substitution in the block-triangular system, values from the component $C$
can enter the formula for $F_{\infty,\infty}^{(m)}(x)$.
\end{definition}

\begin{lemma}[Output-accessibility in the endpoint system]
\label{lem:output-accessible-components}
For $m\ge2$, all three cyclic components $U_m,V_m,I_m$ of $\Gamma_m$ are
output-accessible.
\end{lemma}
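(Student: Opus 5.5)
It suffices to exhibit, for each of the three components, one explicit directed path in $\Gamma_m$ ending at $o=(\infty,\infty)$. Every edge used must be checked against the edge rules in the proof of \Cref{prop:recurrent-blocks}. A $k$-split edge goes from $(m-k,q-k)$ to $(p,q)$ and is present when the source lies in $B_m^2$ and $c_{k,p}>0$. An append-edge goes from $(p-1,m-1)$ to $(p,q)$ and is present when the source lies in $B_m^2$. Since strongly connected components are closed under reachability inside them, one vertex per component reaching $o$ is enough.

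The first step handles $I_m$ and $U_m$ by edges pointing directly into $o$. Take $(p,q)=(\infty,\infty)$. The append-edge has source $(\infty-1,m-1)=(\infty,m-1)$, which is the vertex of $I_m$, so $(\infty,m-1)\to o$ is an edge and $I_m$ is output-accessible. For $U_m$, use the $1$-split edge into $o$. Its source is $(m-1,\infty-1)=(m-1,\infty)\in U_m$, and it is present because $c_{1,\infty}=1>0$ by \Cref{rem:ckp-facts}. In fact every $k$-split edge into $o$ has source $(m-k,\infty)\in U_m$ with $c_{k,\infty}=C_{k-1}>0$, but one such edge suffices.

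For $V_m$, I will show that $V_m$ reaches $I_m$; composing with the edge $I_m\to o$ then finishes the argument. Take $(p,q)=(\infty,m-1)$ and use the append-edge, whose source is $(\infty,m-1)$ itself, so it only gives the self-loop. Instead use a split edge into $(\infty,m-1)$. With $k=1$ the source is $(m-1,m-2)$, which lies in $V_m$ because $m-2<m-1$. The coefficient is $c_{1,\infty}=1$. This edge exists for every $m\ge2$; for $m=2$ it is the edge $(1,0)\to(\infty,1)$, consistent with the $m=2$ computation in \Cref{sec:examples}. This gives the path $(m-1,m-2)\to(\infty,m-1)\to(\infty,\infty)$.

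The only step needing any care is this last one: keeping the edge orientation straight (source on the right-hand side, target on the left) and confirming that $(m-1,m-2)$ really belongs to $V_m$ and not to one of the acyclic leftover vertices. The rest is direct substitution into the edge rules. Finally, I will restate the conclusion in the polynomial form of \Cref{def:output-accessible}. Each displayed path of length $t$ gives a nonzero entry $(W_m(x)^t)_{o,c}$, because all coefficients are nonnegative, so no cancellation can occur.
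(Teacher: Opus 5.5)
Your proof is correct and takes essentially the paper's approach: you exhibit explicit edges into $o=(\infty,\infty)$, from $(m-1,\infty)\in U_m$ by a split edge and from $(\infty,m-1)=I_m$ by the append-edge. The only difference is for $V_m$: you go through $I_m$ using the $1$-split edge $(m-1,m-2)\to(\infty,m-1)$, whereas the paper goes through $U_m$ using the append-edge $(0,m-1)\to(1,\infty)$, and both edges are valid for every $m\ge2$.
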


\begin{proof}
The component $U_m$ reaches $o$ directly through the terms
\[
\sum_{k=1}^m c_{k,\infty}x^kF_{m-k,\infty}^{(m)}(x)
\]
in the row for $F_{\infty,\infty}^{(m)}(x)$.  The singleton component
$I_m=\{(\infty,m-1)\}$ reaches $o$ through the term
$xF_{\infty,m-1}^{(m)}(x)$.  Finally, $V_m$ reaches $U_m$, for instance by the
append-edge
\[
(0,m-1)\to(1,\infty),
\]
and $U_m$ reaches $o$.
\end{proof}

\subsection{The exponential growth constant for fixed \texorpdfstring{$m$}{m}}
\label{subsec:fixed-m-growth}

We now assemble the preceding component analysis.  Set
\[
        \rho_m=\min\{r_{U_m},r_{V_m}\}.
\]
By \Cref{lem:preliminary-dominance-reductions}, this is the minimum among all
cyclic component radii of \(\Gamma_m\).  
By \Cref{lem:interior-zero-exclusion}, all cyclic diagonal blocks of \(I-W_m(z)\) are invertible for \(|z|<\rho_m\); the acyclic singleton blocks are equal to \((1)\). 
Hence \(A^{(m)}(z)\) is analytic in this disk.

It remains to prove that the positive point \(z=\rho_m\) is an actual
singularity of the unrestricted generating function, rather than a component
zero that disappears after solving the full block-triangular system.  The next
lemma proves this using nonnegativity and
output-accessibility.

\begin{lemma}[Propagation of a minimal component pole]
\label{lem:output-pole-propagation}
Assume $m\ge2$.  Let $C$ be an output-accessible cyclic component of $\Gamma_m$
whose component radius $r_C$ is minimal among all cyclic component radii of
$\Gamma_m$.  Then $A^{(m)}(x)$ has a pole at $x=r_C$.
\end{lemma}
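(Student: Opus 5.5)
To prove that $A^{(m)}(x)$ has a pole at $x=r_C$, we show that the coefficients of $F^{(m)}_{\infty,\infty}(x)$ cannot be the coefficients of a function analytic in a disk of radius larger than $r_C$. Since everything is nonnegative, we argue by positivity rather than by tracking cancellations in Cramer's rule. Write the solution of \eqref{eq:F-W-system} as the Neumann series
\[
F_m(x)=\sum_{t\ge0}W_m(x)^t\,x\mathbf 1_{(m+1)^2},
\]
valid as formal power series because $W_m(0)=0$. All coefficients are nonnegative, so for $0<x<\rho_m$, where $\rho_m$ is the minimal component radius, both sides converge and agree with the rational solution. The whole argument lives on the positive real axis, using monotone convergence.

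Fix $c\in C$. By output-accessibility, there is a directed path from $c$ to $o=(\infty,\infty)$, so $(W_m(x)^{t_0})_{o,c}=:g(x)$ is a nonzero polynomial with nonnegative coefficients, hence $g(x)>0$ for $x>0$. Dropping terms, we obtain the coefficientwise bound
\[
F^{(m)}_{\infty,\infty}(x)\ \ge\ g(x)\,F^{(m)}_c(x)\quad\text{for }0<x<\rho_m.
\]
Similarly,
\[
F^{(m)}_c(x)\ \ge\ \sum_{s\ge0}\bigl(W_C(x)^s\,x\mathbf 1_C\bigr)_c,
\]
since walks staying inside $C$ form a subset of all walks and every entry of $\mathbf 1$ is positive.

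We now show that the right-hand side diverges as $x\uparrow r_C$. Let $u>0$ be a left Perron vector of $W_C(x)$ with eigenvalue $\phi_C(x)<1$. Then
\[
u^\top\sum_s W_C(x)^s x\mathbf 1_C=\frac{x\,u^\top\mathbf 1_C}{1-\phi_C(x)}.
\]
To pass from this weighted sum to the single coordinate $c$, we use irreducibility. Every coordinate $d\in C$ reaches $c$ within $|C|$ steps, so $(W_C(x)^{j})_{c,d}\ge \kappa>0$ for some $j\le|C|$, uniformly for $x$ in a compact subinterval near $r_C$. Hence the $c$-coordinate is bounded below by a positive multiple of the full vector sum, which is at least a constant times $u^\top\sum_s W_C^s x\mathbf 1_C/\max u$. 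Because $\phi_C(x)\uparrow1$ continuously, this diverges. Therefore $F^{(m)}_{\infty,\infty}(x)\to\infty$ as $x\uparrow r_C$.

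By the discussion preceding the lemma, $A^{(m)}$ is analytic in $|z|<\rho_m=r_C$, using minimality together with \Cref{lem:interior-zero-exclusion}. Being rational and unbounded as $x\uparrow r_C$, $A^{(m)}$ must have a pole at $r_C$.

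The main obstacle is the step replacing the full vector by the single coordinate $c$. Irreducibility gives $\sum_{j\le|C|}W_C(x)^j\ge\kappa J$ entrywise, where $J$ is the all-ones matrix, and this comparison must be made uniform near $r_C$. This is where we expect care to be needed; the rest is bookkeeping of nonnegative series and the identification of the Neumann series with the rational solution for real $x<\rho_m$.
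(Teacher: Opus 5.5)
Your proposal is correct and follows essentially the paper's argument. A left Perron vector of $W_C(x)$ shows that the component contribution grows at least like $x/(1-\phi_C(x))$, and nonnegativity together with strong connectivity and output-accessibility carries this divergence to $(\infty,\infty)$, where rationality turns it into a pole; the step you flag as the main obstacle is harmless, since the entries of $W_C(x)^j$ are polynomials with nonnegative coefficients and are therefore bounded below on $[x_0,r_C)$ by their positive values at any fixed $x_0>0$, and the paper sidesteps even this by propagating from whichever coordinate diverges rather than fixing $c$ in advance.
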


\begin{proof}
For $0<x<r_C$, all state generating functions are finite: after ordering by
strongly connected components, each cyclic diagonal block is invertible by
\Cref{lem:interior-zero-exclusion}, and the acyclic singleton components
contribute diagonal blocks equal to $(1)$.

For such $x$, the component equation for $C$ has the form
\[
F_C(x)=x\mathbf 1_C+W_C(x)F_C(x)+G_C(x),
\]
where $\mathbf 1_C$ is the all-one vector indexed by the states in $C$, and
$G_C(x)$ is a vector with nonnegative entries collecting the contributions from
states outside $C$.

Let $\phi_C(x)=\spr(W_C(x))$.  Then $\phi_C(x)<1$ for $0<x<r_C$, and
$\phi_C(x)\uparrow1$ as $x\uparrow r_C$.  Let $u(x)>0$ be a left Perron vector of
$W_C(x)$ and normalize it by $u(x)^\top\mathbf 1_C=1$.  Multiplying the component
equation by $u(x)^\top$ gives
\[
(1-\phi_C(x))u(x)^\top F_C(x)
=x+u(x)^\top G_C(x)
\ge x.
\]
Therefore
\[
u(x)^\top F_C(x)\ge \frac{x}{1-\phi_C(x)}\to\infty
\qquad(x\uparrow r_C).
\]
Since the coordinates of $F_C(x)$ are nonnegative and
$u(x)^\top\mathbf 1_C=1$, we have
\[
\max_{c\in C}F_c(x)\to\infty
\qquad(x\uparrow r_C).
\]
As $C$ is finite, at least one coordinate $F_c(x)$ is unbounded; since each
coordinate is a nondecreasing power series with nonnegative coefficients, that
coordinate tends to $+\infty$ as $x\uparrow r_C$.

If $c\to d$ is an edge of $\Gamma_m$, then $W_{d,c}(x)$ is a nonzero polynomial
with nonnegative coefficients, so $W_{d,c}(r_C)>0$.  From the state equation,
\[
F_d(x)=x+\sum_e W_{d,e}(x)F_e(x)\ge W_{d,c}(x)F_c(x).
\]
Thus divergence propagates along directed edges. 
Let \(c_0\in C\) be a coordinate for which \(F_{c_0}(x)\to\infty\).
Since \(C\) is strongly connected, \(c_0\) reaches every vertex of \(C\).
By output-accessibility, some vertex of \(C\) reaches \(o=(\infty,\infty)\).
Hence \(c_0\) reaches \(o\).
Repeating the preceding propagation argument along this path gives
\[
F_{\infty,\infty}^{(m)}(x)\to\infty
\qquad(x\uparrow r_C).
\]
Hence $A^{(m)}(x)=1+F_{\infty,\infty}^{(m)}(x)$ is singular at $x=r_C$.  Since
$A^{(m)}(x)$ is rational, the singularity is a pole.
\end{proof}

To translate a unique dominant pole of the rational generating function into
coefficient information, we use the following standard estimate.

\begin{lemma}[Dominant pole estimate for rational functions]
\label{lem:rational-pole-estimate}
Let $f(z)$ be a rational function analytic at $0$, and let
$f_n=[z^n]f(z)$.  Suppose that $\rho$ is the unique pole of $f$ of smallest
modulus.  If the pole at $\rho$ has order $s$, then there is a nonzero polynomial
$\Pi(n)$ of degree $s-1$.  If $f$ has another pole, let $\Lambda$ be the second
smallest modulus among the poles of $f$; then there is an integer $r\ge0$ such
that
\begin{equation}
        f_n=\Pi(n)\rho^{-n}+O(\Lambda^{-n}n^r).
\end{equation}
If $\rho$ is the only pole of $f$, the error term is absent.
\end{lemma}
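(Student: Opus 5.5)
The plan is the standard partial-fraction argument for rational functions. Write $f=P/Q$ in lowest terms with $Q(0)\ne0$; analyticity at $0$ allows this. Normalize $Q(0)=1$ and factor
\[
Q(z)=\prod_{j}(1-z/\zeta_j)^{s_j},
\]
where the $\zeta_j$ are the distinct poles, $\zeta_1=\rho$, and $s_1=s$. By polynomial division, $f=E+P_1/Q$ with $E$ a polynomial and $\deg P_1<\deg Q$. The partial-fraction decomposition then gives
\[
f(z)=E(z)+\sum_j\sum_{t=1}^{s_j}\frac{\beta_{j,t}}{(1-z/\zeta_j)^t},
\]
and $\beta_{1,s}\ne0$ because $\rho$ is a genuine pole of order exactly $s$.

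Next I extract coefficients term by term using
\[
[z^n](1-z/\zeta)^{-t}=\binom{n+t-1}{t-1}\zeta^{-n}.
\]
This is a polynomial in $n$ of degree $t-1$ with leading coefficient $1/(t-1)!$, and it is valid for every $n\ge0$. The polynomial $E$ affects only finitely many coefficients, so it can be absorbed into the error term for large $n$; for small $n$, one enlarges the implied constant. Collecting the $j=1$ terms gives
\[
\Pi(n)=\sum_{t=1}^{s}\beta_{1,t}\binom{n+t-1}{t-1}.
\]
This has degree exactly $s-1$, since its leading coefficient is $\beta_{1,s}/(s-1)!\neq0$.

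For the remaining poles, every $|\zeta_j|$ with $j\ge2$ is at least $\Lambda$, by the definition of the second smallest modulus and because $\rho$ is the unique pole of smallest modulus. Each contribution is therefore $O(n^{s_j-1}\Lambda^{-n})$, and we take $r=\max_{j\ge2}(s_j-1)$. If $\rho$ is the only pole, the sum consists of the $j=1$ term alone. In that case $f_n=\Pi(n)\rho^{-n}$ for every $n>\deg E$, so the error term is absent for all large $n$.

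There is no real obstacle. The only point needing care is that $\Pi$ has exact degree $s-1$ and is nonzero, which follows from the leading coefficient $\beta_{1,s}\neq0$. The convention that the identity holds for large $n$ should also be stated explicitly, to account for $E$. Alternatively, the whole statement can be cited as the rational case of \cite[Thm.~IV.9]{FlajoletSedgewick2009}.
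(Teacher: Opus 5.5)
Your proposal is correct and is the standard partial-fraction argument that the paper uses; the paper simply cites Theorem~IV.9 of Flajolet and Sedgewick, while you write the details out. Your remark that the polynomial part $E$ means the identity, and the absence of an error term in the single-pole case, holds only for $n>\deg E$ is a precision the paper's statement leaves implicit.
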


\begin{proof}
This is the standard partial-fraction estimate for rational functions; see
Theorem~IV.9 and the following discussion in \cite{FlajoletSedgewick2009}.
\end{proof}

We can now prove that the growth constant exists for every fixed \(m\).
The proof has three steps: first, \(\rho_m\) is the radius of convergence; second, the only pole on the circle \(|z|=\rho_m\) is the positive pole \(z=\rho_m\); and third, the rational-function estimate gives the \(n\)th-root limit.

\begin{theorem}[Growth constant for every fixed adjacency bound]
\label{thm:general-growth-constant}
For $m=1$,
\begin{equation}
        \lim_{n\to\infty}\left(a_n^{(1)}\right)^{1/n}=1.
\end{equation}
For every $m\ge2$, the limit
\[
\lim_{n\to\infty}\left(a_n^{(m)}\right)^{1/n}
\]
exists and is given by
\begin{equation}
        \alpha_m
        =\max\{\lambda_U(m),\lambda_V(m)\}.
\end{equation}
Equivalently, if
\begin{equation}
        \rho_m=\min\{r_{U_m},r_{V_m}\},
\end{equation}
then the radius of convergence of $A^{(m)}(x)$ is $\rho_m$, and
$\alpha_m=\rho_m^{-1}$.
Moreover, \(x=\rho_m\) is the unique dominant
pole of \(A^{(m)}(x)\).
\end{theorem}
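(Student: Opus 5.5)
The case $m=1$ is immediate from the explicit formula \eqref{eq:m1-genfun}: $a_n^{(1)}=2$ for $n\ge2$, so $(a_n^{(1)})^{1/n}\to1$. For $m\ge2$ the argument has three steps, assembled from the lemmas of this section.

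\textbf{Step 1: the radius of convergence is $\rho_m$.} By \Cref{lem:preliminary-dominance-reductions}, $r_{I_m}=1>r_{U_m}\ge\rho_m$. Hence $\rho_m$ is the minimum of all cyclic component radii. By \Cref{lem:interior-zero-exclusion}, every cyclic diagonal block $I-W_C(z)$ is invertible for $|z|<\rho_m$. The acyclic blocks are $(1)$. The block-triangular determinant factorization then shows that $\det M_m(z)\ne0$ on this disk. By Cramer's rule, $A^{(m)}$ is analytic there, so $R_m\ge\rho_m$.

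For the reverse inequality, let $C\in\{U_m,V_m\}$ attain $\rho_m$. It is output-accessible by \Cref{lem:output-accessible-components}. So \Cref{lem:output-pole-propagation} gives a pole of $A^{(m)}$ at $x=\rho_m$. Therefore $R_m=\rho_m$, and Cauchy--Hadamard gives $\overline\alpha_m=\rho_m^{-1}=\max\{\lambda_U(m),\lambda_V(m)\}$.

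\textbf{Step 2: $\rho_m$ is the only pole on $|z|=\rho_m$.} Any pole of $A^{(m)}$ is a zero of $\det M_m=\prod_C\det(I-W_C)$. Let $|z|=\rho_m$ and suppose some factor vanishes at $z$.

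For $C=I_m$, the factor is $1-z$. Its only zero is $1>\rho_m$, so $C\ne I_m$.

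If $C$ is a component with $r_C>\rho_m$, then \Cref{lem:interior-zero-exclusion} rules out $z$, because $|z|<r_C$.

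So $C$ must have $r_C=\rho_m$. By \Cref{lem:preliminary-dominance-reductions}, such a $C$ is either $U_m$, or $V_m$ with $m\ge3$ (for $m=2$, $r_{V_2}=1>\rho_2$). Both are weighted aperiodic by \Cref{lem:weighted-aperiodic}. \Cref{lem:boundary-zero-exclusion} then forces $z=r_C=\rho_m$.

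This settles uniqueness even in the case of a tie $r_{U_m}=r_{V_m}$. In that case both components are critical at the same positive point, which may raise the pole order but adds no other pole on the circle.

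\textbf{Step 3: the $n$th-root limit.} Apply \Cref{lem:rational-pole-estimate} to $f=A^{(m)}$ with unique dominant pole $\rho_m$ of some order $s\ge1$. This gives $a_n^{(m)}=\Pi(n)\rho_m^{-n}+O(\Lambda^{-n}n^r)$, where $\Pi$ is a nonzero polynomial and $\Lambda>\rho_m$.

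Since $a_n^{(m)}\ge0$ and $\Pi$ is a nonzero polynomial, $\Pi(n)$ is eventually of constant sign, and that sign must be positive. Hence $a_n^{(m)}\sim\Pi(n)\rho_m^{-n}$ with $\Pi(n)>0$ and of polynomial size. Taking $n$th roots, the polynomial factor tends to $1$, so $(a_n^{(m)})^{1/n}\to\rho_m^{-1}$.

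The main point to get right is Step 2. It requires keeping the pole set of the reduced rational function inside the zero set of the determinant, and making sure the $V_2$ exception and the possible $U_m$/$V_m$ tie are handled. Both are covered by the lemmas cited above.
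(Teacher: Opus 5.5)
Your proposal is correct and follows the paper's proof essentially step for step. You get analyticity on $|z|<\rho_m$ from the block-triangular factorization together with the interior zero-exclusion lemma, and an actual pole at $\rho_m$ from output-accessibility and pole propagation. Uniqueness on the circle comes from weighted aperiodicity, with $V_2$ excluded because $r_{V_2}=1$, and the $n$th-root limit from the rational dominant-pole estimate plus the eventual-positivity argument. Your separate treatment of $I_m$ and your remark on a possible $U_m$/$V_m$ tie only spell out the same case analysis in more detail.
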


\begin{proof}
The case $m=1$ follows from \eqref{eq:m1-genfun}: the sequence is eventually
constant equal to $2$, so its $n$th-root limit is $1$.

Assume $m\ge2$.  By \Cref{lem:preliminary-dominance-reductions}, $r_{I_m}=1$ and
$r_{U_m}<1$.  Hence
\[
\rho_m=\min\{r_{U_m},r_{V_m}\}<1
\]
is the minimum radius among all cyclic components of $\Gamma_m$.

We first show that $A^{(m)}(z)$ is analytic for $|z|<\rho_m$.  After ordering by
strongly connected components, $I-W_m(z)$ is block triangular.  The acyclic
singleton components contribute diagonal blocks equal to $(1)$.  By
\Cref{lem:interior-zero-exclusion}, each cyclic component matrix $I-W_C(z)$ is
invertible whenever $|z|<r_C$.  Since $\rho_m$ is no larger than any cyclic
component radius, all diagonal blocks are invertible for $|z|<\rho_m$.  Thus
$I-W_m(z)$ is invertible there, and all state generating functions are analytic
for $|z|<\rho_m$.  In particular, the radius of convergence $R_m$ of
$A^{(m)}(x)$ satisfies
\[
R_m\ge \rho_m.
\]

Choose $C\in\{U_m,V_m\}$ with $r_C=\rho_m$.  By
\Cref{lem:output-accessible-components}, this component is output-accessible.
Since $r_C$ is minimal among all cyclic component radii,
\Cref{lem:output-pole-propagation} shows that $A^{(m)}(x)$ has a pole at
$x=\rho_m$.  Hence
\[
R_m\le \rho_m.
\]
Combining the two inequalities gives $R_m=\rho_m$.

It remains to show that the full $n$th-root limit exists, not only the limsup.
For every cyclic component with radius larger than $\rho_m$,
\Cref{lem:interior-zero-exclusion} gives no zeros on $|z|=\rho_m$.  A component
with radius equal to $\rho_m$ is one of $U_m$ or $V_m$.  The component $U_m$ is
weighted aperiodic by \Cref{lem:weighted-aperiodic}.  
If $V_m$ has radius $\rho_m$, then $m\ge3$, because in the $m=2$ case \Cref{lem:preliminary-dominance-reductions} gives $r_{V_2}=1>r_{U_2}=\rho_2$, so $V_2$ is not dominant.
For $m\ge3$, $V_m$ is weighted aperiodic by \Cref{lem:weighted-aperiodic}.
Therefore, by \Cref{lem:boundary-zero-exclusion}, the only component zero on $|z|=\rho_m$ is the positive point $z=\rho_m$ itself.

Thus $A^{(m)}(x)$ has a unique dominant pole, located at $x=\rho_m$.
\Cref{lem:rational-pole-estimate} gives
\begin{equation}
        a_n^{(m)}=\Pi(n)\rho_m^{-n}+O(\Lambda^{-n}n^r)
\end{equation}
for some $\Lambda>\rho_m$, some integer $r\ge0$, and a nonzero real polynomial
$\Pi$.  The error term is exponentially smaller than the first term.  Since
$a_n^{(m)}\ge0$ and $\Pi$ has an eventual sign, this sign must be positive.
Therefore
\begin{equation}
        \lim_{n\to\infty}\left(a_n^{(m)}\right)^{1/n}=\rho_m^{-1}.
\end{equation}
Since $\rho_m^{-1}=\max\{\lambda_U(m),\lambda_V(m)\}$, the theorem follows.
\end{proof}

If $\rho_m$ is a simple pole of $A^{(m)}(x)$, we can further specify the asymptotic form of $a_n^{(m)}$.
\begin{corollary}[Simple dominant pole asymptotics]
\label{cor:simple-pole-asymptotics}
Suppose that $A^{(m)}(x)=N_m(x)/D_m(x)$ is written in reduced form, and suppose that $\rho_m$ is a simple pole.
Then
\begin{equation}
        a_n^{(m)}\sim \kappa_m\rho_m^{-n}
        =\kappa_m\alpha_m^n,
        \qquad
        \kappa_m=-\frac{N_m(\rho_m)}{\rho_mD_m'(\rho_m)}.
\end{equation}
More precisely, if $\Lambda_m$ is the modulus of the next pole, then
\begin{equation}
        a_n^{(m)}=\kappa_m\rho_m^{-n}
        \left(1+O\left(\left(\frac{\rho_m}{\Lambda_m}\right)^n n^r\right)\right)
\end{equation}
for some integer $r\ge0$.
\end{corollary}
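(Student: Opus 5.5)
The plan is to combine \Cref{thm:general-growth-constant} with the standard residue computation for a simple pole of a rational function. By that theorem, $\rho_m$ is the unique dominant pole of $A^{(m)}(x)=N_m(x)/D_m(x)$. Since the fraction is in reduced form, $N_m(\rho_m)\ne0$. Simplicity means $D_m(\rho_m)=0$ and $D_m'(\rho_m)\ne0$. The steps are to isolate the principal part at $\rho_m$, extract its coefficients exactly, and bound the remainder using the next pole.

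First, write $D_m(x)=(x-\rho_m)E(x)$ with $E(\rho_m)=D_m'(\rho_m)\ne0$. The principal part of $A^{(m)}$ at $\rho_m$ is $\frac{\beta}{x-\rho_m}$, where
\[
\beta=\frac{N_m(\rho_m)}{D_m'(\rho_m)}.
\]
Rewriting it as a geometric series gives
\[
\frac{\beta}{x-\rho_m}=-\frac{\beta}{\rho_m}\cdot\frac{1}{1-x/\rho_m}.
\]
Its $n$th coefficient is therefore $-\beta\rho_m^{-1}\rho_m^{-n}=\kappa_m\rho_m^{-n}$, which gives exactly the stated constant $\kappa_m$.

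Next, the difference $A^{(m)}(x)-\frac{\beta}{x-\rho_m}$ is a rational function whose poles are the remaining poles of $A^{(m)}$. If there are none, it is a polynomial, and its coefficients vanish for large $n$. Otherwise, by \Cref{thm:general-growth-constant}, all remaining poles have modulus at least $\Lambda_m>\rho_m$. I will apply \Cref{lem:rational-pole-estimate}, or equivalently a partial-fraction expansion, to this difference. This gives coefficients of size $O(\Lambda_m^{-n}n^r)$, where $r$ is one less than the maximal pole order on the circle $|x|=\Lambda_m$.

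Finally, I will divide by $\kappa_m\rho_m^{-n}$. This requires $\kappa_m\ne0$, which holds because $N_m(\rho_m)\ne0$. Dividing yields the relative error $O((\rho_m/\Lambda_m)^n n^r)$, which tends to zero and hence gives $a_n^{(m)}\sim\kappa_m\alpha_m^n$. As a consistency check, positivity of $\kappa_m$ follows from $a_n^{(m)}\ge0$, as in the proof of \Cref{thm:general-growth-constant}; this is not needed for the statement. The only point requiring care is using reduced form to guarantee $N_m(\rho_m)\ne0$, so that the leading term does not vanish. Otherwise the argument is routine.
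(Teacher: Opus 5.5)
Your proposal is correct and follows the paper's route: the paper's proof cites \Cref{lem:rational-pole-estimate} together with the residue $\lim_{x\to\rho_m}(x-\rho_m)A^{(m)}(x)=N_m(\rho_m)/D_m'(\rho_m)$, and you unpack this by subtracting the principal part, noting correctly that reduced form gives $N_m(\rho_m)\ne0$, so $\kappa_m\ne0$ and the division is legitimate. For the remainder, use the partial-fraction bound rather than the lemma as literally stated, because the remainder may have several poles on $|x|=\Lambda_m$, while the lemma assumes a unique pole of smallest modulus.
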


\begin{proof}
This is \Cref{lem:rational-pole-estimate} together with the residue formula
\[
\lim_{x\to\rho_m}(x-\rho_m)A^{(m)}(x)
=\frac{N_m(\rho_m)}{D_m'(\rho_m)}.
\]
\end{proof}

\subsection{Simple-pole asymptotics for the case \texorpdfstring{$m=2$}{m=2}}
\label{subsec:growth-m2}

By \Cref{lem:preliminary-dominance-reductions}, we have
\[
        r_{V_2}=r_{I_2}=1
        \qquad\text{and}\qquad
        r_{U_2}<1.
\]
Hence the minimal component radius is \(r_{U_2}\).  With the order
\[
        (0,\infty),(1,\infty)
\]
for \(U_2\), the component matrix is
\[
        W_{U_2}(x)=
        \begin{pmatrix}
        0 & x\\
        x^2 & x
        \end{pmatrix}.
\]
Thus
\[
        \det(I-W_{U_2}(x))=1-x-x^3.
\]
Let
\[
        p_2(x)=1-x-x^3.
\]
Since \(p_2'(x)=-1-3x^2<0\) for every real \(x\), and since
\(p_2(0)>0\) and \(p_2(1)<0\), the polynomial \(p_2\) has a unique zero in
\((0,1)\).  We denote this zero by \(\rho_2\).  Moreover,
\(p_2(1/2)>0\), so \(\rho_2>1/2\).  Numerically,
\[
        \rho_2\approx 0.682.
\]

Therefore \(r_{U_2}=\rho_2\).  By
\Cref{thm:general-growth-constant}, \(x=\rho_2\) is the unique dominant pole
of \(A^{(2)}(x)\).  It remains only to check that this pole is simple.

From \eqref{eq:m2gf}, the denominator is
\[
        D_2(x)=(1-x)^2p_2(x).
\]
Since \(p_2'(\rho_2)\ne0\) and \(\rho_2\ne1\), the denominator has a simple
zero at \(x=\rho_2\).  Because \(x=\rho_2\) is an actual pole by
\Cref{thm:general-growth-constant}, it is a simple pole of \(A^{(2)}(x)\).

The numerator in \eqref{eq:m2gf} is
\[
        N_2(x)=1-2x+2x^2-x^4-x^6.
\]
Using \(\rho_2^3=1-\rho_2\), we obtain
\[
        N_2(\rho_2)=\rho_2(2\rho_2-1).
\]
Also
\[
        D_2'(\rho_2)
        =(1-\rho_2)^2p_2'(\rho_2)
        =-(1-\rho_2)^2(1+3\rho_2^2).
\]
Hence
\[
        \kappa_2
        =
        -\frac{N_2(\rho_2)}{\rho_2D_2'(\rho_2)}
        =
        \frac{2\rho_2-1}{(1-\rho_2)^2(1+3\rho_2^2)}
        \approx 1.51.
\]
By \Cref{cor:simple-pole-asymptotics},
\[
        a_n^{(2)}\sim \kappa_2\alpha_2^n,
        \qquad
        \alpha_2=\rho_2^{-1}\approx 1.47.
\]

\subsection{Simple-pole asymptotics for the case \texorpdfstring{$m=3$}{m=3}}
\label{subsec:growth-m3}

For \(m=3\), the cyclic components with more than one vertex are
\[
        V_3=\{(0,2),(1,0),(1,2),(2,0),(2,1)\}
\]
and
\[
        U_3=\{(0,\infty),(1,\infty),(2,\infty)\}.
\]
With the order
\[
        (0,2),(1,0),(1,2),(2,0),(2,1)
\]
for \(V_3\), its component matrix is
\[
        W_{V_3}(x)=
        \begin{pmatrix}
        0 & 0 & 0 & 0 & x\\
        x & 0 & 0 & 0 & 0\\
        x & x^2 & 0 & 0 & x\\
        0 & 0 & x & 0 & 0\\
        0 & 0 & x & x & 0
        \end{pmatrix},
\]
and
\[
        \det(I-W_{V_3}(x))
        =
        1-x^2-2x^3-x^4-x^5-x^6.
\]
With the order
\[
        (0,\infty),(1,\infty),(2,\infty)
\]
for \(U_3\), its component matrix is
\[
        W_{U_3}(x)=
        \begin{pmatrix}
        0 & 0 & x\\
        x^3 & x^2 & x\\
        2x^3 & x^2 & x
        \end{pmatrix},
\]
and
\[
        \det(I-W_{U_3}(x))
        =
        (1+x)(x^5-x^4-x^3+x^2-2x+1).
\]

Let
\[
        H(x)=x^5-x^4-x^3+x^2-2x+1
\]
and
\[
        G(x)=x^6+x^5+x^4+2x^3+x^2-1.
\]

We first locate the component radius of \(U_3\).  For \(0\le x\le1\), we have
\(5x^4-4x^3\le x^3\) and \(x(1-x)(2-x)\le1/2\).  Hence
\[
        H'(x)
        =
        5x^4-4x^3-3x^2+2x-2
        \le
        x^3-3x^2+2x-2
        =
        x(1-x)(2-x)-2
        \le
        -\frac32<0.
\]
Thus \(H\) is strictly decreasing on \([0,1]\).  Since
\(H(0.54)>0\) and \(H(0.55)<0\), it has a unique zero in
\((0.54,0.55)\).  We denote this zero by \(\rho_3\).  Then
\[
        r_{U_3}=\rho_3,
        \qquad
        \rho_3\approx0.547.
\]

We next compare this with the radius of \(V_3\). 
Since \(G(0)=-1\), \(G(1)>0\), and
\[
        G'(x)=6x^5+5x^4+4x^3+6x^2+2x>0
\]
for \(x>0\), the polynomial \(G\) has a unique positive zero.
Moreover, because \(\rho_3<0.55\) and \(G(0.55)<0\), we have
\[
        G(\rho_3)<0.
\]
Therefore the positive zero of \(G\), namely \(r_{V_3}\), satisfies
\[
        r_{V_3}>\rho_3=r_{U_3}.
\]
Since also \(r_{I_3}=1>\rho_3\), the minimal component radius is \(r_{U_3}\).

By \Cref{thm:general-growth-constant}, \(x=\rho_3\) is the unique dominant
pole of \(A^{(3)}(x)\).  It remains only to check that this pole is simple.

From \eqref{eq:m3gf}, the denominator factors as
\[
        D_3(x)
        =
        (x-1)(x+1)H(x)G(x).
\]
We have \(H'(\rho_3)<0\), and \(G(\rho_3)<0\).  Also
\(\rho_3\ne\pm1\).  Hence \(D_3\) has a simple zero at \(x=\rho_3\).
Since \(x=\rho_3\) is an actual pole by
\Cref{thm:general-growth-constant}, it is a simple pole of \(A^{(3)}(x)\).

Therefore, by \Cref{cor:simple-pole-asymptotics},
\[
        \kappa_3
        =
        -\frac{N_3(\rho_3)}{\rho_3D_3'(\rho_3)}
        \approx 2.99,
\]
and
\[
        a_n^{(3)}\sim \kappa_3\alpha_3^n,
        \qquad
        \alpha_3=\rho_3^{-1}\approx1.83.
\]

\subsection{Dependence on \texorpdfstring{$m$}{m} and the limit \texorpdfstring{$m\to\infty$}{m to infinity}}
\label{subsec:large-m-rates}

The fixed-$m$ theorem identifies the growth constant through the cyclic
components. 
We now prove estimates that hold uniformly as $m$ varies by direct comparison and a constructive lower bound. 
These estimates explain two global features: the growth constants are monotone in the
adjacency bound $m$, and they approach $4$ as $m\to\infty$.

\begin{theorem}[Large-adjacency estimates]
\label{thm:large-m-rates}
For every $m\ge1$, the exponential growth constant $\alpha_m$ exists.  Moreover,
\begin{equation}
        \alpha_m\le\alpha_{m+1},
\end{equation}
for every $m\ge1$, and
\begin{equation}
        C_{m-1}^{1/(m+1)}\le \alpha_m<4
\end{equation}
for every finite $m$.  Consequently,
\begin{equation}
        \lim_{m\to\infty}\alpha_m=4.
\end{equation}
More precisely,
\begin{equation}
        4-\alpha_m=O\left(\frac{\log m}{m}\right)
        \qquad(m\to\infty).
\end{equation}
\end{theorem}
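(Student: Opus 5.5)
Existence of $\alpha_m$ for every $m\ge1$ is already \Cref{thm:general-growth-constant}, so the work is in the three inequalities and the rate. The plan is to use inclusions for monotonicity, a constructive family for the lower bound, and a transfer-matrix or pole argument for the strict upper bound.

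\emph{Monotonicity.} The inclusion $\A_n^{(m)}\subseteq\A_n^{(m+1)}$ gives $a_n^{(m)}\le a_n^{(m+1)}$ for all $n$. Taking $n$th roots and using existence of both limits yields $\alpha_m\le\alpha_{m+1}$.

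\emph{Upper bound $\alpha_m<4$.} Since $a_n^{(m)}\le C_n$, we get $\alpha_m\le4$, so only strictness needs work. I would show that a fixed-length forbidden local configuration occurs in a positive proportion of all $132$-avoiders and is excluded here; the obvious candidate is a large descent. Concretely, I would use the state system and compare radii. By \Cref{thm:general-growth-constant}, $\rho_m=\min\{r_{U_m},r_{V_m}\}$, and each component matrix satisfies $W_C(x)\le$ the matrix obtained by replacing $c_{k,p}$ by $C_{k-1}$.

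A cleaner route is a direct pole argument on the unrestricted count. Every $\pi\in\A_n^{(m)}$ decomposes at the maximum with first block of length $k-1\le m-1$, or ends in $n$. Dropping the endpoint constraints gives the coefficientwise domination
\[
A^{(m)}(x)\preceq \frac{1}{1-x-\sum_{k=1}^m C_{k-1}x^k}.
\]
The dominant root $s_m$ of $1=x+\sum_{k=1}^m C_{k-1}x^k$ exceeds $1/4$, because $\sum_{k\ge1}C_{k-1}4^{-k}=\tfrac12$ and hence $\tfrac14+\sum_{k\le m}C_{k-1}4^{-k}<\tfrac14+\tfrac12<1$. Therefore $\alpha_m\le s_m^{-1}<4$.

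\emph{Lower bound.} Build many members of $\A_n^{(m)}$ by concatenating blocks. Take $n=j(m+1)$ and consider $\pi=L^{(1)}\,\mu_1\,L^{(2)}\,\mu_2\cdots$, where each segment uses a consecutive window of $m+1$ values, placed in decreasing order across segments. Each segment is ``top value, then an arbitrary $132$-avoider of the remaining $m$ values''; more precisely it is a left block from $\Av_{m-1}(132)$, then the top value, then one small entry. Each segment is then itself $132$-avoiding. Decreasing windows create no cross-segment $132$, since the first index would have to be in an earlier, larger window. Internal steps inside a window have size at most $m$, and so does the step between consecutive windows: I must arrange each segment to end at its window's minimum and the next to start near the next window's top, which the choice of segment structure above achieves. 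This gives $C_{m-1}^{\,j}$ permutations of length $j(m+1)$. Hence $\alpha_m\ge C_{m-1}^{1/(m+1)}$, with intermediate lengths handled by the monotone padding available from the limit's existence.

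\emph{The rate.} Since $C_{m-1}\asymp 4^{m}m^{-3/2}$, we get
\[
C_{m-1}^{1/(m+1)}=4\exp\bigl(-O(\log m)/m\bigr)=4-O(\log m/m),
\]
and together with $\alpha_m<4$ this gives both the limit $4$ and the stated rate.

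The main obstacle I expect is the lower-bound construction: verifying carefully that the junction steps between segments have size at most $m$ while keeping $C_{m-1}$ free choices per segment. If the naive arrangement fails, I would sacrifice one value per window (window size $m+2$ or a shift), which still yields the same $O(\log m/m)$ rate.
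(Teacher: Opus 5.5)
Your argument is correct. The genuine difference from the paper is in the strict bound $\alpha_m<4$; the monotonicity, lower bound and rate follow the paper closely.

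The paper stays inside the state system:
\begin{itemize}
\item it bounds every row sum of $W_{U_m}(1/4)$ and $W_{V_m}(1/4)$ by $\frac14+\sum_{k=1}^m C_{k-1}4^{-k}<\frac34$;
\item it deduces $\spr(W_C(1/4))<1$, hence $r_C>1/4$ for both components;
\item it transfers this to $\alpha_m$ through the identification $\alpha_m=\max\{\lambda_U(m),\lambda_V(m)\}$ of \Cref{thm:general-growth-constant}.
\end{itemize}
You instead collapse all states into the unrestricted one. This gives the scalar majorant $a_n^{(m)}\le a_{n-1}^{(m)}+\sum_{k=1}^{\min(m,n-1)}C_{k-1}a_{n-k}^{(m)}$ for $n\ge2$. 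Starting from $a_0^{(m)}=a_1^{(m)}=1$, induction yields your coefficientwise domination. The same numerical inequality then shows that $1-x-\sum_{k=1}^m C_{k-1}x^k$ has no zero in $|x|\le 1/4$.

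Your route is more elementary:
\begin{itemize}
\item it uses only \Cref{lem:maxdecomp} and \Cref{lem:position};
\item it needs neither Perron--Frobenius theory nor the component identification;
\item it bounds the $\limsup$ even without knowing the limit exists;
\item it gives the explicit bound $\alpha_m\le s_m^{-1}$.
\end{itemize}
The paper's route keeps the argument within its component framework and shows directly that each of $\lambda_U(m)$ and $\lambda_V(m)$ is below $4$.

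The lower bound is essentially the paper's construction, except that you put the block maximum second-to-last rather than first. Passing to the subsequence $n=j(m+1)$ is legitimate once the limit is known to exist, so no ``monotone padding'' is needed; the paper pads with a decreasing tail instead.

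The junction you flagged as the main obstacle causes no trouble. A segment ends at its window minimum $w$. The next segment begins with an entry of its middle block, which lies in $\{w-m,\dots,w-2\}$ (or is $w-1$ when $m=1$), so the step has size at most $m$.

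Your first description of a segment (top value, then any $132$-avoider of the other $m$ values) is actually a different construction from your ``more precisely'' version. It also works: its last entry lies in $\{w,\dots,w+m-1\}$ and the next segment starts at $w-1$. It even gives $C_m$ choices per window, hence $\alpha_m\ge C_m^{1/(m+1)}$.
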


\begin{proof}
The existence of $\alpha_m$ is \Cref{thm:general-growth-constant}.  The
monotonicity in $m$ follows from containment. 
Indeed,
\[
\A_n^{(m)}\subseteq \A_n^{(m+1)}
\]
for every $n$, and hence $a_n^{(m)}\le a_n^{(m+1)}$.  Taking $n$th roots and
then limits gives $\alpha_m\le\alpha_{m+1}$.

We next prove the strict upper bound $\alpha_m<4$ for every finite $m$.  The
inclusion $\A_n^{(m)}\subseteq\Av_n(132)$ gives only $\alpha_m\le4$, so we use
the finite-state system.  The case $m=1$ is immediate from $\alpha_1=1$; assume
$m\ge2$.

Let $C$ be either $U_m$ or $V_m$.  For a row $(p,q)\in C$ of $W_C(1/4)$, set
\[
R_{p,q}^C=\sum_{(r,s)\in C}(W_C(1/4))_{(p,q),(r,s)}.
\]
We shall use the standard Catalan generating function \cite{FlajoletSedgewick2009}
\[
\operatorname{Cat}(t)=\sum_{j\ge0} C_jt^j
=\frac{1-\sqrt{1-4t}}{2t}
\qquad (0< t<1/4).
\]
Since the coefficients are nonnegative, monotone convergence gives
\[
\sum_{j\ge0} C_j4^{-j}
=
\lim_{t\uparrow 1/4}\sum_{j\ge0}C_jt^j
=
\lim_{t\uparrow 1/4}\operatorname{Cat}(t)
=2.
\]
Since $W_C$ is a principal submatrix of $W_m$ with nonnegative entries,
\begin{align*}
R_{p,q}^C
&\le \sum_{(r,s)\in B_m^2} W^{(m)}_{(p,q),(r,s)}(1/4) \\
&\le \sum_{k=1}^m c_{k,p}4^{-k}+\frac14 \\
&\le \sum_{k=1}^m C_{k-1}4^{-k}+\frac14
 <\frac14\sum_{j=0}^\infty C_j4^{-j}+\frac14
 =\frac34.
\end{align*}
Here we used \Cref{rem:ckp-facts}, and the term $1/4$ accounts for the possible
append-edge contribution.  Hence $\|W_C(1/4)\|_\infty<3/4$, and therefore
\[
\spr(W_C(1/4))\le \|W_C(1/4)\|_\infty<1.
\]
Thus $r_C>1/4$ and $\lambda_C<4$ for $C=U_m,V_m$.  By
\Cref{thm:general-growth-constant}, $\alpha_m=\max\{\lambda_U(m),\lambda_V(m)\}<4$.

It remains to prove the lower bound and the limit.  We construct a subset of
$\A_n^{(m)}$.  Let $L=m+1$ and $k=\lfloor n/L\rfloor$.  Starting from the
decreasing permutation $n,n-1,\ldots,1$, partition the first $kL$ entries into
the consecutive blocks
\[
B_i=(n-(i-1)L,n-(i-1)L-1,\ldots,n-iL+1)
\]
for $1\le i\le k$.  If $r=n-kL>0$, leave the remaining entries
\[
T=(r,r-1,\ldots,1)
\]
in decreasing order.

For each full block $B_i$, keep its first and last entries fixed and permute the
middle $m-1$ entries by an arbitrary $132$-avoiding permutation of those entries.
There are $C_{m-1}$ choices for each block.  These choices are made
independently for the $k$ full blocks, while the final tail $T$, if present, is
left decreasing.

Every permutation obtained in this way belongs to $\A_n^{(m)}$.  The adjacency
constraint is preserved because the entries inside a full block form a set of
$m+1$ consecutive integers, so any adjacent difference inside the block is at
most $m$.  The difference across two consecutive full blocks is $1$, and the
same holds at the boundary between the last full block and the tail, if the tail
is present.  The tail itself is decreasing.

The resulting permutation also avoids $132$.  Inside each full block, the first
entry is the maximum of the block and the last entry is the minimum of the
block.  These two fixed endpoints cannot participate in a $132$-pattern within
that block, and the middle entries were chosen to avoid $132$.  The tail is
decreasing.  Across different parts, every entry in an earlier part is larger
than every entry in a later part.  Thus, if indices $i<j<k$ are not all in the
same part, then $\pi_i$ and $\pi_k$ lie in different parts and $\pi_i>\pi_k$,
which is incompatible with $\pi_i<\pi_k<\pi_j$.  Hence no cross-part
$132$-pattern is created.

Thus
\[
a_n^{(m)}\ge C_{m-1}^{\lfloor n/(m+1)\rfloor}.
\]
Taking $n$th roots and then limits gives
\[
\alpha_m\ge C_{m-1}^{1/(m+1)}.
\]
The standard Catalan asymptotic \cite{FlajoletSedgewick2009}
\[
C_{m-1}\sim \frac{4^{m-1}}{\sqrt{\pi(m-1)^3}}
\]
implies
\[
C_{m-1}^{1/(m+1)}\to4.
\]
Since $C_{m-1}^{1/(m+1)}\le\alpha_m<4$, we obtain $\alpha_m\to4$.

The same asymptotic gives the stated rate of convergence.  Taking logarithms,
\[
\log4-\log C_{m-1}^{1/(m+1)}
=O\left(\frac{\log m}{m}\right).
\]
Therefore
\[
4-C_{m-1}^{1/(m+1)}
=O\left(\frac{\log m}{m}\right).
\]
Since $C_{m-1}^{1/(m+1)}\le\alpha_m<4$, it follows that
\[
0<4-\alpha_m
\le 4-C_{m-1}^{1/(m+1)}
=O\left(\frac{\log m}{m}\right).
\]
This proves the theorem.
\end{proof}

\subsection{Numerical growth constants for selected \texorpdfstring{$m$}{m}}
\label{subsec:numerical-growth}

For $m\ge2$, \Cref{thm:general-growth-constant} gives
\[
\alpha_m=\max\{\lambda_U(m),\lambda_V(m)\}.
\]
The rates are computed by solving the spectral-radius equations
\begin{equation}
\label{eq:component-rate-equations}
\spr(W_{U_m}(r_U))=1,
\qquad
\spr(W_{V_m}(r_V))=1,
\end{equation}
and then setting
\[
\lambda_U(m)=r_U^{-1},
\qquad
\lambda_V(m)=r_V^{-1}.
\]
\Cref{tab:growth-estimates} records selected values up to $m=100$.  The table is
included to show the approach to the Catalan growth constant $4$; the approach
is visible but slow.

The component $V_m$ has
\[
|V_m|=\frac{(m-1)(m+2)}{2}
\]
states.  A dense eigenvalue computation on this component would scale cubically
in $|V_m|$, hence as $O(m^6)$.  The matrices used here are sparse.  The displayed
values were computed by treating $W_{U_m}(x)$ and $W_{V_m}(x)$ as sparse
matrices and solving \eqref{eq:component-rate-equations} by bisection in the
positive variable $r$, using sparse Perron eigenvalue computations at each step.

\begin{table}[H]
\centering
\caption{Component growth rates from the cyclic components $U_m$ and $V_m$.  By
\Cref{thm:general-growth-constant}, the column $\alpha_m$ is the exponential
growth constant.  Values are rounded to three decimal places.  The last column
is the lower bound from \Cref{thm:large-m-rates}.}
\label{tab:growth-estimates}
\begin{tabular}{ccccc}
\toprule
$m$ & $\lambda_U(m)$ & $\lambda_V(m)$ & $\alpha_m$ & $C_{m-1}^{1/(m+1)}$\\
\midrule
2   & 1.466 & 1.000 & 1.466 & 1.000\\
3   & 1.827 & 1.691 & 1.827 & 1.189\\
4   & 2.100 & 2.091 & 2.100 & 1.380\\
5   & 2.312 & 2.352 & 2.352 & 1.552\\
6   & 2.480 & 2.536 & 2.536 & 1.706\\
7   & 2.615 & 2.675 & 2.675 & 1.841\\
8   & 2.728 & 2.786 & 2.786 & 1.961\\
9   & 2.822 & 2.876 & 2.876 & 2.068\\
10  & 2.902 & 2.953 & 2.953 & 2.164\\
20  & 3.333 & 3.357 & 3.357 & 2.756\\
50  & 3.676 & 3.682 & 3.682 & 3.339\\
100 & 3.817 & 3.819 & 3.819 & 3.614\\
\bottomrule
\end{tabular}
\end{table}

For the displayed values, the larger component growth rate is $\lambda_U(m)$ for
$m=2,3,4$ and $\lambda_V(m)$ for the displayed values $m\ge5$.  We do not prove
in this paper that this comparison pattern holds for all larger $m$.  This
question is separated from the existence of the growth constant: the growth
constant is already known from \Cref{thm:general-growth-constant}, but deciding
which of the two component growth rates is larger remains a separate problem.

\section{Connections and further directions}\label{sec:connections}

\paragraph{Bounded gaps and transfer matrices.}
The condition $|\pi_{i+1}-\pi_i|\le m$ is the bounded-gap or $m$-bounded
condition studied in several contexts.  Avgustinovich and Kitaev
\cite{AvgustinovichKitaev2008} related bounded gaps, after inversion and a
parameter shift, to uniquely $k$-determined permutations and transfer matrices.
Gillespie, Monks, and Monks
\cite{GillespieMonksMonks2020} proved rationality for anchored bounded-gap
permutations.  Our method gives rationality in a different class: bounded-gap
permutations that also satisfy the Catalan restriction of $132$-avoidance.  The
maximum decomposition of $132$-avoiders replaces a transfer graph based on
adjacent local states with a graph whose vertices are two endpoint-deficiency
thresholds.

\paragraph{Enumeration schemes.}
Finite-state enumeration of permutation classes is often studied through
generating trees, insertion encodings, and enumeration schemes.  Enumeration
schemes were introduced by Zeilberger \cite{Zeilberger1998} and extended by
Vatter \cite{Vatter2008}; insertion encodings were introduced by
Albert, Linton, and Ru\v{s}kuc \cite{AlbertLintonRuskuc2005}.  For a modern
overview of permutation classes, see Vatter \cite{Vatter2015}.  The present
family is not a permutation class in the usual sense, because the adjacency
condition is not closed under taking arbitrary subpermutations.  Nevertheless,
the endpoint-state recursion gives a finite linear representation for each fixed
$m$.

\paragraph{Catalan objects.}
The class of $132$-avoiders has standard bijections with Dyck paths and with
binary plane trees \cite{Krattenthaler2001,Bona2012Surprising}.  A concrete problem is
to translate the adjacency condition $|\pi_{i+1}-\pi_i|\le m$ into those two
models.  One should also ask whether the endpoint thresholds $(p,q)$ correspond
to specified path or tree boundary statistics, and whether this gives a smaller
finite state space.

\paragraph{Open problems.}
The most immediate problems are the following.
\begin{enumerate}[label=\textbf{P\arabic*.},leftmargin=2.2em]
\item Determine the minimal number of states required to compute
$A^{(m)}(x)$ for general $m$.
\item Describe the degree, factorization, or dominant root of the denominator of
$A^{(m)}(x)$ uniformly in $m$.
\item Compare the two component growth rates $\lambda_U(m)$ and $\lambda_V(m)$.
Numerically, $\lambda_U(m)>\lambda_V(m)$ for $m=2,3,4$, while
$\lambda_V(m)>\lambda_U(m)$ for all computed values $m\ge5$.  Prove or
disprove this pattern.  More generally, determine when the dominant pole is
simple, and thereby obtain $a_n^{(m)}\sim \kappa_m\alpha_m^n$ for each fixed
$m$.
\item Refine the recursion by statistics such as descents, peaks, inversions,
or endpoint values.
\end{enumerate}

\section*{Use of AI-assisted tools}

During the preparation of this work, the authors used OpenAI's ChatGPT
as an interactive research and writing assistant. 
The tool was used to explore possible research directions, to help formulate candidate finite-state recurrences, to assist with symbolic and numerical verification code, and to help draft and edit parts of the exposition.
The mathematical arguments, computations, references, and final text were reviewed, checked, and revised by the authors, who take full responsibility for the content of the paper.

\bibliographystyle{plain}
\bibliography{references}

@article{AvgustinovichKitaev2008,
  author  = {Avgustinovich, Sergey and Kitaev, Sergey},
  title   = {On Uniquely {$k$}-Determined Permutations},
  journal = {Discrete Mathematics},
  volume  = {308},
  number  = {9},
  pages   = {1500--1507},
  year    = {2008},
  doi     = {10.1016/j.disc.2007.03.079}
}

@book{Bona2012,
  author    = {B{\'o}na, Mikl{\'o}s},
  title     = {Combinatorics of Permutations},
  edition   = {2},
  publisher = {CRC Press},
  address   = {Boca Raton},
  year      = {2012},
  doi       = {10.1201/b12210}
}

@book{HornJohnson2012,
  author    = {Horn, Roger A. and Johnson, Charles R.},
  title     = {Matrix Analysis},
  edition   = {2},
  publisher = {Cambridge University Press},
  address   = {Cambridge},
  year      = {2012}
}

@book{FlajoletSedgewick2009,
  author    = {Flajolet, Philippe and Sedgewick, Robert},
  title     = {Analytic Combinatorics},
  publisher = {Cambridge University Press},
  address   = {Cambridge},
  year      = {2009},
  doi       = {10.1017/CBO9780511801655}
}

@article{GillespieMonksMonks2020,
  author  = {Gillespie, Maria M. and Monks, Kenneth G. and Monks, Kenneth M.},
  title   = {Enumerating Anchored Permutations with Bounded Gaps},
  journal = {Discrete Mathematics},
  volume  = {343},
  number  = {9},
  pages   = {111957},
  year    = {2020},
  doi     = {10.1016/j.disc.2020.111957}
}

@misc{Nadler2026,
  author        = {Nadler, Nathaniel},
  title         = {On {$132$}-Avoiding Permutations with an Adjacency Constraint},
  year          = {2026},
  eprint        = {2604.22135},
  archivePrefix = {arXiv},
  primaryClass  = {math.CO},
  note          = {arXiv:2604.22135 [math.CO]}
}

@article{SimionSchmidt1985,
  author  = {Simion, Rodica and Schmidt, Frank W.},
  title   = {Restricted Permutations},
  journal = {European Journal of Combinatorics},
  volume  = {6},
  number  = {4},
  pages   = {383--406},
  year    = {1985},
  doi     = {10.1016/S0195-6698(85)80052-4}
}

@article{Vatter2008,
  author  = {Vatter, Vincent},
  title   = {Enumeration Schemes for Restricted Permutations},
  journal = {Combinatorics, Probability and Computing},
  volume  = {17},
  number  = {1},
  pages   = {137--159},
  year    = {2008},
  doi     = {10.1017/S0963548307008516}
}

@article{DeSantisEtAl2013,
  author    = {DeSantis, Derek and Field, Rebecca and Hough, Wesley and Jones, Brant and Meissen, Rebecca and Ziefle, Jacob},
  title     = {Permutation Pattern Avoidance and the {Catalan} Triangle},
  journal   = {Missouri Journal of Mathematical Sciences},
  volume    = {25},
  number    = {1},
  pages     = {50--60},
  year      = {2013}
}

@article{Zeilberger1998,
  author  = {Zeilberger, Doron},
  title   = {Enumeration Schemes and, More Importantly, Their Automatic Generation},
  journal = {Annals of Combinatorics},
  volume  = {2},
  pages   = {185--195},
  year    = {1998},
  doi     = {10.1007/BF01608488}
}

@article{AlbertLintonRuskuc2005,
  author  = {Albert, Michael H. and Linton, Steve and Ru{\v{s}}kuc, Nik},
  title   = {The Insertion Encoding of Permutations},
  journal = {Electronic Journal of Combinatorics},
  volume  = {12},
  number  = {1},
  pages   = {Research Paper 47},
  year    = {2005},
  doi     = {10.37236/1944}
}

@incollection{Vatter2015,
  author    = {Vatter, Vincent},
  title     = {Permutation Classes},
  booktitle = {Handbook of Enumerative Combinatorics},
  editor    = {B{\'o}na, Mikl{\'o}s},
  publisher = {CRC Press},
  address   = {Boca Raton},
  pages     = {754--833},
  year      = {2015},
  doi       = {10.1201/b18255}
}

@article{Krattenthaler2001,
  author  = {Krattenthaler, Christian},
  title   = {Permutations with restricted patterns and {Dyck} paths},
  journal = {Advances in Applied Mathematics},
  volume  = {27},
  pages   = {510--530},
  year    = {2001}
}

@article{Bona2012Surprising,
  author  = {B{\'o}na, Mikl{\'o}s},
  title   = {Surprising symmetries in objects counted by {Catalan} numbers},
  journal = {Electronic Journal of Combinatorics},
  volume  = {19},
  number  = {1},
  pages   = {P62},
  year    = {2012}
}

\end{document}